\documentclass[reqno]{amsart}
\usepackage[utf8]{inputenc}
\usepackage[rgb]{xcolor}
\usepackage[colorlinks, allcolors=blue]{hyperref}
\usepackage{amsmath,amscd,amsthm,amssymb,tikz,ifthen,verbatim,enumerate}
\usetikzlibrary{calc,positioning,arrows}

\allowdisplaybreaks 
\makeatletter \g@addto@macro\@floatboxreset\centering \makeatother 

\usepackage{cleveref} 
\crefname{subsection}{section}{sections}
\Crefname{subsection}{Section}{Sections}

\newtheorem{thm}{Theorem}
\newtheorem{cor}[thm]{Corollary}
\newtheorem{lem}[thm]{Lemma}
\newtheorem{prop}[thm]{Proposition}
\newtheorem{thm*}{Theorem} 
\theoremstyle{definition}
\newtheorem{defn}[thm]{Definition}
\newtheorem{remark}[thm]{Remark}
\newcommand\<{\begin{equation}} \renewcommand\>{\end{equation}}

\newcommand{\C}{{\mathbb C}}
\newcommand{\D}{{\mathbb D}}

\newcommand{\R}{{\mathbb R}}
\newcommand{\Z}{{\mathbb Z}}
\newcommand{\Sb}{{\mathbb S}}

\newcommand{\Bc}{{\mathcal B}}
\newcommand{\Cc}{{\mathcal C}}

\newcommand{\Fc}{{\mathcal F}}
\newcommand{\Gc}{{\mathcal G}}

\newcommand\abs[1]{\left\vert#1\right\vert}

\newcommand\setbuilder[3][:]{\left\{\, #2 #1 #3 \,\right\}}
\let\tilde\widetilde
\renewcommand\bar[1]{\,\overline{\!#1\!}\,}

\providecommand\Id{\mathrm{Id}}

\providecommand\temp{} 

\newcommand\commutativeDiagram[9][1.25]{\raisebox{-3em}{\begin{tikzpicture}[scale=1.5]
    \node (A) at (0,1) {$#2$};
    \node (B) at (#1,1) {$#4$};
    \node (C) at (0,0) {$#7$};
    \node (D) at (#1,0) {$#9$};
	\path[->,font=\scriptsize,>=angle 90]
        (A) edge node [above] {$#3$} (B)
        (A) edge node [left]  {$#5$} (C)
        (B) edge node [right] {$#6$} (D)
        (C) edge node [below] {$#8$} (D);
\end{tikzpicture}}}

\newcounter{commentcounter}


\newcommand\A{{\bar A}}
\renewcommand\P{{\bar P}}
\newcommand\Dual{{\bar D}}
\newcommand\geo{\mathrm{geo}}
\newcommand{\G}{\Gamma} 
\newcommand{\g}{\gamma} 

\DeclareMathOperator{\Cod}{Cod}

\title{Extremal parameters and their duals for Fuchsian boundary~maps}
\date{January 31, 2020}
\author{Adam Abrams}
\makeatletter
\@ifclassloaded{amsart}{
	\address{Institute of Mathematics of the Polish Academy of Sciences, Warsaw, Poland 00656}
	\email{the.adam.abrams@gmail.com}
}{} 
\makeatother

\begin{document}
\begin{abstract}
	We describe arithmetic cross-sections for geodesic flow on compact surfaces of constant negative curvature using generalized Bowen-Series boundary maps and their natural extensions associated to cocompact torsion-free Fuchsian groups. If the boundary map parameters are extremal, that is, each is an endpoint of a geodesic that extends a side of the fundamental polygon, then the natural extension map has a domain with finite rectangular structure, and the associated arithmetic cross-section is parametrized by this set. This construction allows us to represent the geodesic flow as a special flow over a symbolic system of coding sequences. Moreover, each extremal parameter has a corresponding dual parameter choice such that the ``past'' of the arithmetic code of a geodesic is the ``future'' for the code using the dual parameter. This duality was observed for two classical parameter choices by Adler and Flatto; here we show constructively that every extremal parameter set has a dual.
\end{abstract}
\maketitle
\tableofcontents

\section{Introduction} \label{sec intro}

Any closed, oriented, compact surface $M$ of genus $g \ge 2$ and constant negative curvature can be modeled as a quotient $M=\G\backslash\D$, where $\D=\setbuilder{ z \in \C }{ \abs z < 1 }$ is the unit disk endowed with hyperbolic metric
\[ \label{hypmetric} \frac{2 \abs{dz} }{1-{\abs z}^2} \]
and $\G$ is a finitely generated Fuchsian group of the first kind acting freely on $\D$.

Geodesics in this model are half-circles or diameters orthogonal to $\Sb=\partial\D$, the circle at infinity. The geodesic flow $\{\tilde\varphi^t\}$ on $\D$ is defined as an $\R$-action on the unit tangent bundle $S\D$ that moves a tangent vector along the geodesic defined by this vector with unit speed. The geodesic flow $\{\tilde\varphi^t\}$ on $\D$ descends to the geodesic flow $\{\varphi^t\}$ on the factor $M=\Gamma\backslash\D$ via the canonical projection
\< \label{projection} \pi: S\D \to SM \>
of the unit tangent bundles.  The orbits of the geodesic flow $\{\varphi^t\}$ are oriented geodesics on $M$.

A surface $M = \G\backslash\D$ of genus $g$ admits an $(8g-4)$-sided fundamental polygon $\Fc$ (see \Cref{fig irregular sides}) whose sides satisfy the extension condition of Bowen and Series~\cite{BS79}: the geodesic extensions of these segments never intersect the interior of the tiling sets $\gamma \Fc$, $\gamma \in \G$.
We label the sides of $\Fc$ in a counterclockwise order by numbers $1 \le i \le 8g-4$ and label the vertices of $\Fc$ by $V_i$ so that side $i$ connects $V_i$ to $V_{i+1}$, with indices mod $8g-4$. 

\begin{figure}[htb]
	\includegraphics[width=0.5\textwidth]{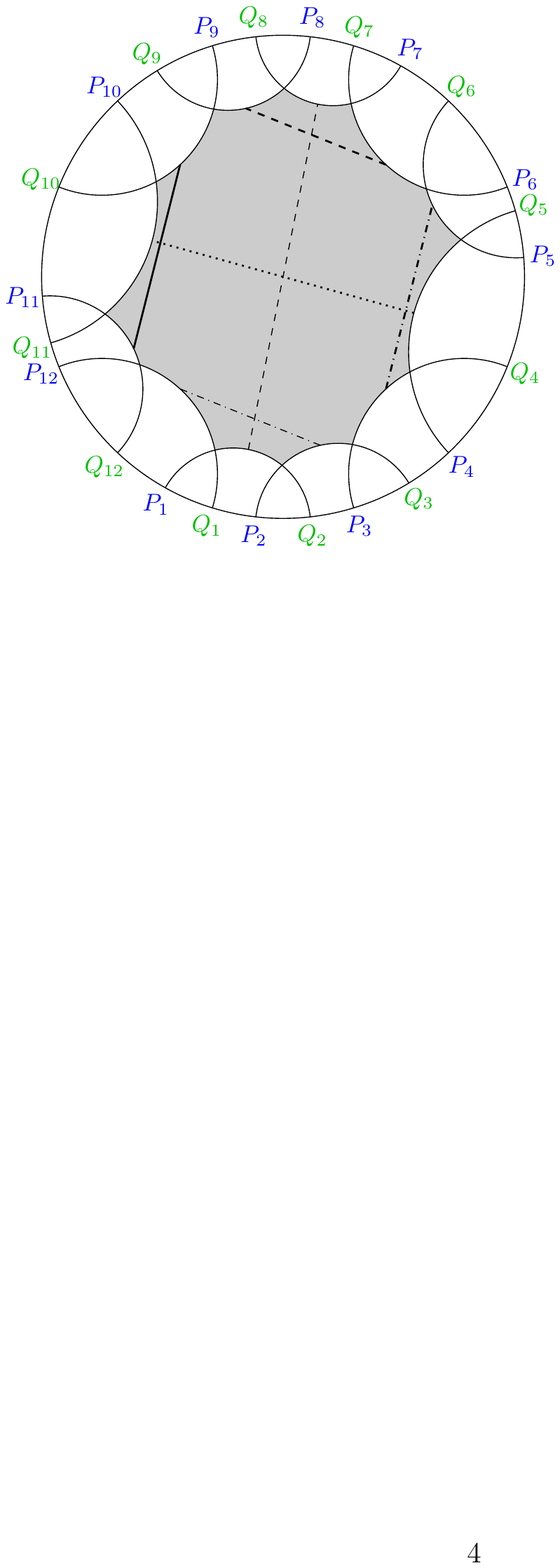}
    \caption{An irregular polygon with side identifications, genus $2$.}
    \label{fig irregular sides}
\end{figure}

We denote by $P_i$ and $Q_{i+1}$ the endpoints of the oriented infinite geodesic that extends side $i$ to the circle at infinity~$\Sb$.\footnote{\,The points $P_i$, $Q_i$ in this paper and~\cite{BS79,KU17,AK19,AKU20} are denoted by $a_i$, $b_{i-1}$, respectively, in~\cite{AF91}.} The order of endpoints on $\Sb$ is the following:
\[ P_1, Q_1, P_2, Q_2, \ldots, P_{8g-4}, Q_{8g-4}. \]
The identification of the sides of $\Fc$ is given by the side pairing rule
\< \label{sigma} \sigma(i) := \left\{ \begin{array}{ll}
    4g-i \bmod (8g-4) & \text{ if $i$ is odd} \\
    2-i \bmod (8g-4) & \text{ if $i$ is even}.
\end{array} \right. \>
The generators $T_i$ of $\G$ associated to this fundamental domain are M\"obius transformations satisfying the following properties, with $\rho(i)=\sigma(i)+1$:
\begin{align*}
    T_i(V_i)&=V_{\rho(i)}\\
    T_{\sigma(i)}T_i&=\Id\\
    T_{\rho^3(i)}T_{\rho^2(i)}T_{\rho(i)}T_i&=\Id.
\end{align*}
According to B.~Weiss~\cite{W92}, the existence of such a fundamental polygon is an old result of Dehn, Fenchel, and Nielsen, while J.~Birman and C.~Series~\cite{BiS87} attribute it to Koebe~\cite{Ko29}. Adler and Flatto~\cite[Appendix A]{AF91} give a careful proof of existence and properties of the fundamental $(8g-4)$-gon for any surface group $\G$ such that $\G\backslash\D$ is a compact surface of genus $g$. Note that in general the polygon $\Fc$ need not be regular. If $\Fc$ is regular, it is the Ford fundamental domain, i.e., $P_iQ_{i+1}$ is the isometric circle for $T_i$, and $T_i(P_iQ_{i+1}) = Q_{\sigma(i)+1}P_{\sigma(i)}$ is the isometric circle for $T_{\sigma(i)}$ so that the inside of the former circle is mapped to the outside of the latter, and all internal angles of $\Fc$ are equal to $\pi/2$.

A \emph{cross-section} $C$ for the geodesic flow $\{\varphi^t\}$ is a subset of the unit tangent bundle $SM$ visited by (almost) every geodesic infinitely often both in the future and in the past. It is well-known that the geodesic flow can be represented as a \emph{special flow} on the space
\[ {C}^{h}=\{\, (v,s) : v\in C, 0\leq s\leq h(v) \,\}. \]
It is given by the formula $\varphi^t(v,s)=(v, s+t)$ with the identification $(v,h(v))=(R(v),0)$, where the ceiling function $h:C\to\R$ is the \emph{time of the first return} of the geodesic defined by $v$ to $C$, and $R:C\to C$ given by $R(v)=\varphi^{h(v)}(v)$ is the \emph{first return map}.

\smallskip
\newcommand\shift{\sigma} 
Let $\mathcal A$ be a finite or countable alphabet, ${\mathcal A}^{\Z}$ be the space of all bi-infinite sequences $\overline x = \{x_i\}_{i \in \Z}$ with topology induced by the metric $d(\overline x, \overline y) = 2^{-\!\min\{\abs i:x_i\ne y_i\}}$. Let $\shift:{\mathcal A}^\Z\to {\mathcal A}^\Z$ be the left shift $(\shift {\overline x})_i=x_{i+1}$, and $\Lambda\subset {\mathcal A}^\Z$ be a closed $\shift$-invariant subset. Then $ (\Lambda,\shift)$ is called a \emph{symbolic dynamical system}. There are some important classes of such dynamical systems. The space $({\mathcal A}^\Z,\shift)$ is called the \emph{full shift}. If the space $\Lambda$ is given by a set of simple transition rules which can be described with the help of a matrix consisting of zeros and ones, we say that $ (\Lambda,\shift)$ is a \emph{subshift of finite type} or a \emph{one-step topological Markov chain} or just a \emph{topological Markov chain}. A factor of a topological Markov chain is called a \emph{sofic shift}. For exact definitions, see~\cite[Section~1.9]{KH}.

\smallskip
In order to represent the geodesic flow as a special flow over a symbolic dynamical system, one needs to choose an appropriate cross-section $C$ and code it, i.e., find an appropriate symbolic dynamical system $ (\Lambda,\shift)$ and a continuous surjective map $\Cod: \Lambda\to C$ (in some cases the actual domain of $\Cod$ is $\Lambda$ except a finite or countable set of excluded sequences) defined such that the diagram
\[ \commutativeDiagram{\Lambda}{\shift}{\Lambda}{\Cod}{\Cod}{C}{R}{C} \]
is commutative. We can then talk about \emph{coding sequences} for geodesics defined up to a shift that corresponds to a return of the geodesic to the cross-section $C$. Notice that usually the coding map is not injective but only finite-to-one (see~\cite[\S{}3.2 and~\S{}5]{A98} and~\cite[Examples~3 and~4]{AK19}).

Slightly paraphrased, Adler and Flatto's method of representing the geodesic flow on $M$ as a special flow is the following. Consider the set of unit tangent vectors $(z,\zeta)\in S\D$ based on the boundary of $\Fc$ and pointed inwards, and denote by $C_\geo$ its image under the canonical projection $\pi:S\D \to SM$.\footnote{\,Adler and Flatto~\cite[p.~240]{AF91} used outward-pointing tangent vectors, but the results are easily transferable.} Every geodesic $\gamma$ in $\D$ is equivalent to one intersecting the fundamental domain $\Fc$ and thus corresponds to a countable set of segments in $\Fc$. More precisely, if we start with a segment $\g\cap\Fc$ which enters $\Fc$ through side $j$ and exits $\Fc$ through side $i$, then $T_i\g\cap\Fc$ is the next segment in $\Fc$, and $T_{j}\g\cap\Fc$ is the previous segment.
The projection $\pi(\g)$ visits $C_\geo$ infinitely often, hence $C_\geo$ is a cross-section, which we call \emph{the geometric cross-section}. The geodesic $\pi(\g)$ can be coded geometrically by a bi-infinite sequence of generators of $\G$ identifying the sides of $\Fc$, as explained below. This method of coding goes back to Morse; for more details see~\cite{K96}.

The set $\Omega_\geo$ of oriented geodesics in $\D$ tangent to the vectors in $C_\geo$ coincides with the set of geodesics in $\D$ intersecting $\Fc$; this is depicted in \Cref{fig rainbow CL} in coordinates $(u,w)\in \Sb\times\Sb, u\neq w$, where $u$ and $w$ are, respectively, the beginning and end of a geodesic. The coordinates of the ``vertices'' of $\Omega_\geo$ are $(P_j,Q_{j+1})$ (the upper part) and $(Q_j,P_j)$ (the lower part). Denote
\< \Gc_i = \setbuilder{ (u,w) }{ \text{geodesic $uw$ exits $\Fc$ through side }i }. \>
Each ``curvilinear horizontal slice'' $\Gc_i$ is contained in the horizontal strip $\Sb \times [P_i,Q_{i+1}]$. The map $F_\geo(u,w)$ piecewise transforms variables by the same M\"obius transformations that identify the sides of $\Fc$, that is,
\<
    F_\geo(u,w)=(T_iu, T_iw) \quad\text{if } (u,w)\in \Gc_i,
\>
and $\Gc_i$ is mapped to the ``curvilineal vertical slice'' belonging to the vertical strip between $P_{\sigma(i)}$ and $Q_{\sigma(i)+1}$. $F_\geo$ is a bijection of $\Omega_\geo$. Due to the symmetry of the fundamental polygon $\Fc$, the ``curvilinear horizontal (vertical) slices'' are congruent to each other by a Euclidean translation. 

For a geodesic $\g$, the \emph{geometric coding sequence}
\[ 
	[\gamma]_\geo = (\ldots,n_{-2},n_{-1},n_0,n_1,n_2,\ldots)
\] 
is such that $\sigma(n_k)$ is the side of $\Fc$ through which the geodesic $F_\geo^k\g$ exits $\Fc$. By construction, the left shift in the space of geometric coding sequences corresponds to the map $F_\geo$, and the geodesic flow $\{\varphi^t\}$ becomes a special flow over a symbolic dynamical system.

\begin{figure}[hbt]
\begin{tikzpicture}
	\draw (0,0) node [left=1em] {\includegraphics[width=0.45\textwidth]{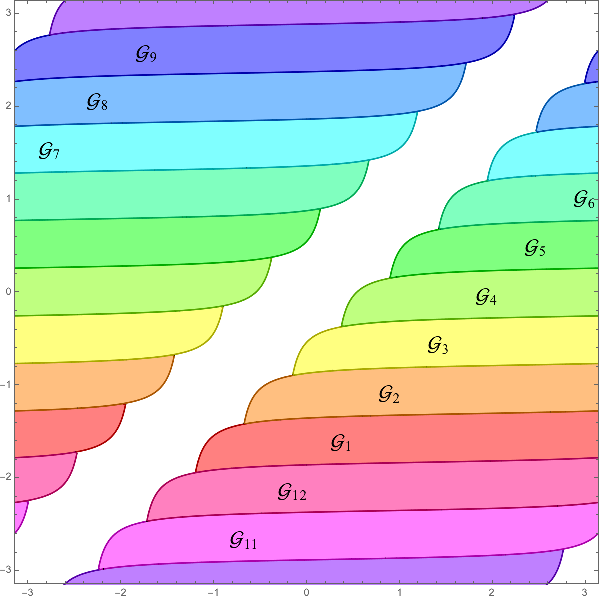}};
	\draw (0,0) node [above] {\;$\stackrel{F_\geo}{\longrightarrow}$};
	\draw (0,0) node [right=1em] {\includegraphics[width=0.45\textwidth]{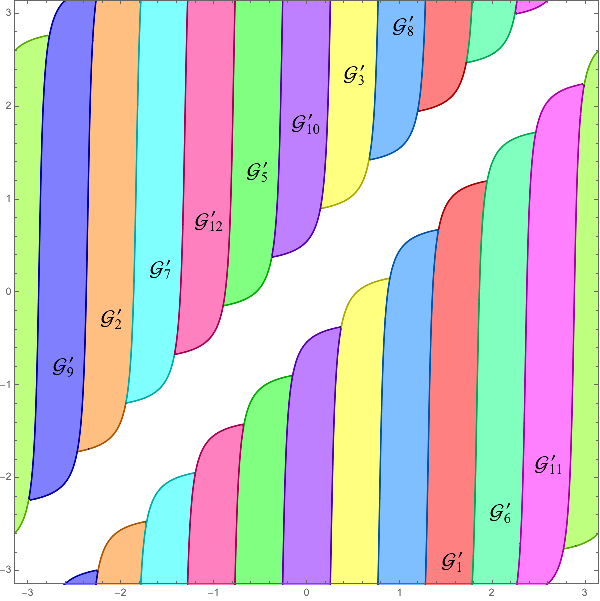}};
\end{tikzpicture}\vspace*{-1em}
\caption{Curvilinear set $\Omega_\geo$ and its image for $g=2$. Here $\Gc'_i = F_\geo(\Gc_i)$.}
\label{fig rainbow CL}
\end{figure}

The set of geometric coding sequences is natural to consider, but it is never Markov. In order to obtain a special flow over a Markov chain, Adler and Flatto replaced the curvilinear boundary of the set $\Omega_\geo$ by polygonal lines, obtaining what they call a ``rectilinear'' set $\Omega_\P$ and defining a map $F_\P$ mapping any horizontal line into a part of a horizontal line. We denote by $F_\geo:\Omega_\geo \to \Omega_\geo$ and $F_\P:\Omega_\P \to \Omega_\P$, respectively, their ``curvilinear'' and ``rectilinear'' transformations. They prove that these maps are conjugate and that the rectilinear map is sofic.

\medskip
In~\cite{KU17,KU17e,AK19,AKU20} the Bowen--Series boundary map $f_\P$ is generalized via a set of $8g-4$ parameters 
\[ \bar A = \{A_1,A_2,\ldots,A_{8g-4}\} \]
with $A_i \in [P_i,Q_i]$.
For any such $\A$ we can define the \emph{boundary map} $f_\A:\Sb \to \Sb$ given by
\< \label{fa} f_\A(x) = T_i(x) \quad\text{if } x \in [A_i,A_{i+1}) \>
and the map $F_\A$ on $\Sb\times\Sb$ given by
\< \label{FA} F_\A(u,w) = (T_i(u),T_i(w)) \quad\text{if } w \in [A_i,A_{i+1}). \>
A priori, using $F_\A$ has little advantage over $f_\A$, but for a suitable domain $\Omega_\A$ we will show that the restriction $F_\A\big|_{\Omega_\A}$ is bijective; the map $F_\A:\Omega_\A \to \Omega_\A$ is called the \emph{natural extension} of the boundary map $f_\A$.

\medskip
There are two important classes of parameters:
\begin{defn}
	We say that $\A$ satisfies the \emph{short cycle property} if~$A_i \in (P_i,Q_i)$ and $f_\A(T_iA_i) = f_\A(T_{i-1}A_i)$ for all $1 \le i \le 8g-4$.
\end{defn}

\begin{defn} 
	\label{defn extremal}
	We say that $\A$ is \emph{extremal} if $A_i \in \{P_i,Q_i\}$ for all $1 \le i \le 8g-4$.
\end{defn}

Adler and Flatto \cite{AF91} dealt exclusively with the cases $\A = \bar P$ and $\A = \bar Q$, which are both extremal, and \cite{AK19} applied the constructions of Adler--Flatto to derive similar results for parameters with short cycles. This paper shows that the relevant results hold for all extremal parameters.

The paper is organized as follows. In \Cref{sec bijectivity}, we show that the map~$F_\A$ with~$\A$ extremal has a bijectivity domain~$\Omega_\A$ with finite rectangular structure. In \Cref{sec conjugacy}, we describe a conjugacy $\Phi$ between $F_\geo:\Omega_\geo\to\Omega_\geo$ and $F_\A:\Omega_\A\to\Omega_\A$ and use this map to define the arithmetic cross-section $C_\A \subset SM$.
In \Cref{sec coding}, the geodesic flow on $M$ is realized as a special flow over a symbolic system $(X_\A,\sigma)$, and we show that for extremal parameters this shift system is always sofic. In \Cref{sec dual}, we prove that every extremal parameter has a ``dual'' parameter choice\footnote{\,By contrast, no instances of duality involving parameters with short cycles are known.} for which the natural extension of the boundary map also has a finite rectangular structure domain; in \Cref{sec examples} we give examples of this duality.

\section{Extremal parameters} \label{sec extremal}

For all of \nameCref{sec extremal}s~\ref{sec bijectivity}-\ref{sec coding} the parameters $\A = \{A_1, A_2, \ldots, A_{8g-4}\}$ will be extremal, that is, for each $i$ either $A_i = P_i$ or $A_i = Q_i$.

\subsection{Bijectivity domains} \label{sec bijectivity}

\begin{figure}[ht]
	\includegraphics[width=0.67\textwidth]{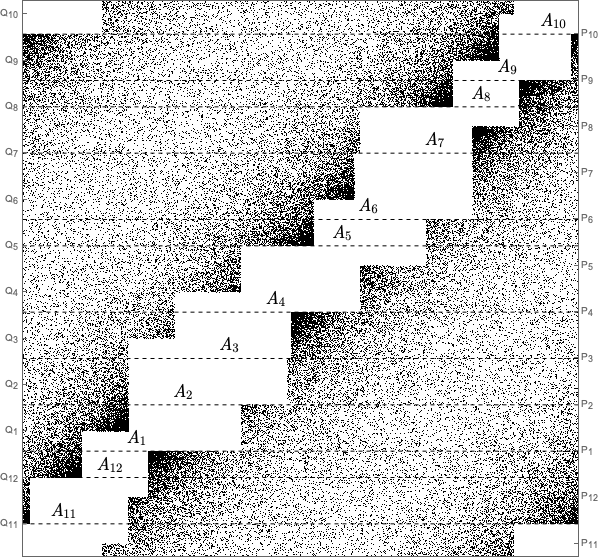}
	\caption{The domain $\Omega_\A$ for the extremal parameters in~\eqref{example A}.}
	\label{fig attractor example}
\end{figure}

This \namecref{sec bijectivity} describes in detail the process by which the domain $\Omega_\A$ of $F_\A$ was realized, culminating with an explicit description (Equation~\ref{Omega E defn}) of $\Omega_A$ in \Cref{thm extremal bijectivity}.

From comparisons of numerical attractors for $F_\A$ with different extremal parameters, we conjectured that the corner points of $\Omega_\A$ for extremal $\A$ should be of the form $(\,\cdot\,,Q_i)$ for lower corners and $(\,\cdot\,,P_i)$ for upper corners.
Recall the definition of $\sigma$ from~\eqref{sigma}, and define
\< \label{tau} \tau(i) = i + (4g-2) \quad\bmod 8g-4. \>
Careful analysis shows that if a set
\providecommand\tempG{x}
\[ \Lambda = \bigcup_{i=1}^{8g-4} [x'_i,\tempG_{i-2}] \times [P_i,Q_i] \cup [x'_i,\tempG_{i-1}] \times [Q_i,P_{i+1}] \]
were to satisfy $F_\A(\Lambda) = \Lambda$ for extremal $\A$, this would imply that 
\[ \left\{ \begin{array}{ll} 
	\tempG_{\sigma(i)} = T_{i} \tempG_{i-2} &\text{if }A_i = P_i \\
	x'_{\tau\sigma(i)} = T_{i-1} x'_i &\text{if }A_i = Q_i.
\end{array} \right. \]
Setting $x'_i = T_{\sigma(i+1)}T_{\tau(i)} \tempG_{\tau(i)}$, these relations can be stated in terms of $\{\tempG_i\}$ only~as
\[ \left\{ \begin{array}{ll} 
	\tempG_{\sigma(i)} = T_{i} \tempG_{i-2} &\text{if }A_i = P_i \\
	\tempG_{\sigma(i)} = T_{\tau(i)+1} \tempG_{\tau(i)} &\text{if }A_i = Q_i.
\end{array} \right. \]
To that end, we prove \Cref{thm system solution} and ultimately \Cref{thm extremal bijectivity}.

\begin{prop} \label{thm system solution}
	For any extremal $\A$, there exist unique values $G_1,\ldots,G_{8g-4}$ such that all of the following hold for all $1 \le i \le 8g-4$:
	\begin{itemize}
		\item $G_i \in [P_i,P_{i+1}]$,
		\item $G_{\sigma(i)} = T_iG_{i-2}$ if $A_i = P_i$,
		\item $G_{\sigma(i)} = T_{\tau(i)+1}G_{\tau(i)}$ if $A_i = Q_i$.
	\end{itemize}
\end{prop}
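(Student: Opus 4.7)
The plan is to recast the system as a fixed-point equation on the box $B := \prod_{k=1}^{8g-4}[P_k, P_{k+1}]$. Since $\sigma$ is an involution on indices, each $k \in \{1,\ldots,8g-4\}$ arises as $\sigma(i)$ for a unique $i$, so exactly one of the two given equations specifies $G_k$ as a M\"obius image of a single other coordinate $G_{\phi(k)}$. Specifically, $\phi(k) = \sigma(k)-2$ and $S_k := T_{\sigma(k)}$ when $A_{\sigma(k)} = P_{\sigma(k)}$, while $\phi(k) = \tau\sigma(k)$ and $S_k := T_{\tau\sigma(k)+1}$ when $A_{\sigma(k)} = Q_{\sigma(k)}$. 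Writing $\Psi(G)_k := S_k(G_{\phi(k)})$, the proposition amounts to showing $\Psi$ has a unique fixed point in $B$.

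The first task is to verify $\Psi(B) \subseteq B$, i.e., each $S_k$ sends $[P_{\phi(k)}, P_{\phi(k)+1}]$ into $[P_k, P_{k+1}]$. This is a geometric check on how a generator acts on consecutive vertices of $\Fc$, using the identifications $T_iV_i = V_{\rho(i)}$, the side-pairing rule \eqref{sigma}, and the Bowen--Series extension condition (which prevents images of these vertices from landing in the wrong arc). Next, since $\phi$ is a function on a finite set, its forward orbits are eventually periodic. On each $\phi$-cycle $c_1 \to c_2 \to \cdots \to c_m \to c_1$ the system forces $G_{c_1}$ to be fixed by the element $W := S_{c_1}\cdots S_{c_m} \in \G$. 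Because $\G$ is torsion-free cocompact Fuchsian, $W$ is either the identity or hyperbolic; assuming $W \ne \mathrm{Id}$, it has two fixed points on $\Sb$, and the inclusion $W([P_{c_1}, P_{c_1+1}]) \subseteq [P_{c_1}, P_{c_1+1}]$ (inherited from $\Psi(B) \subseteq B$) singles out the attracting one as the unique admissible value $G_{c_1}$. Remaining values on the cycle come from partial compositions, and values on the pre-periodic tail of each $\phi$-orbit come from applying the appropriate $S_j$'s; the invariance $\Psi(B)\subseteq B$ then places every $G_k$ in its prescribed arc, and uniqueness propagates back along $\phi$ from the cycle.

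The main obstacle is ruling out $W = \mathrm{Id}$ for each $\phi$-cycle. The surface group $\G$ satisfies the nontrivial relations $T_{\sigma(i)}T_i = \mathrm{Id}$ and $T_{\rho^3(i)}T_{\rho^2(i)}T_{\rho(i)}T_i = \mathrm{Id}$, so a purely formal group-theoretic argument is not available; one must show that the word $S_{c_1}\cdots S_{c_m}$ arising from a $\phi$-cycle never reduces to the identity under these relations, leveraging the specific way $\phi$ interleaves the shift $i \mapsto i-2$ with $\tau\sigma$ (the two cases never mix in a way that produces a full vertex-cycle relation). A backup route, should this combinatorics prove delicate, is a deformation argument across the $2^{8g-4}$ extremal parameter choices: induct on the number of coordinates where $\A$ differs from the classical case $\A = \P$ handled by Adler and Flatto~\cite{AF91}, transferring existence and uniqueness one coordinate flip at a time.
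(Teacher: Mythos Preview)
Your outline shares its skeleton with the paper's argument: both recognize that the system \eqref{system} is a functional graph on $\{1,\ldots,8g-4\}$ whose cycles determine certain $G_k$ as fixed points, with the remaining values obtained by pushing along tails. Your claim $\Psi(B)\subseteq B$ is correct and is essentially the content of the paper's range lemma (\Cref{lem range}), though the paper only states it for the two-step composition it actually needs. Where the two diverge is in how much structure is extracted from $\phi$.

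The paper does not treat the cycles abstractly. Via the ``types'' in \Cref{def types} it shows that every $\phi$-cycle has length~$1$ or~$2$: Type~\ref{type-P-loop} and Type~\ref{type-Q-loop} indices satisfy $\phi^2(i)=i$, while Type~\ref{type-PQ} and Type~\ref{type-QP} indices satisfy $\phi^2(i)=i\pm 4g$ and hence lie on tails; \Cref{lem infinite +4g} is precisely the statement that these tails terminate. Consequently the return map $W$ on a cycle is always one of the explicit products $T_{\sigma(i)}T_{i+2}$ or $T_{\tau\sigma(i)+1}T_{i+1}$, whose fixed points $\{P_{i+1},Q_{i+2}\}$ or $\{P_i,Q_{i+1}\}$ are known, and exactly one of each pair lies in $[P_i,P_{i+1}]$. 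This simultaneously disposes of $W=\mathrm{Id}$ and of uniqueness within the arc, both of which you flag as open.

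Your proposal, by contrast, leaves genuine gaps. First, you identify ``$W\ne\mathrm{Id}$'' as the main obstacle but do not resolve it; the combinatorial remark that $\phi$ ``interleaves'' the two cases is not a proof, and your backup induction on coordinate flips is only sketched. Second, even granting $W$ hyperbolic, the inclusion $W(I)\subseteq I$ for $I=[P_{c_1},P_{c_1+1}]$ does not by itself exclude the possibility $W(I)=I$ with both fixed points on $\partial I$, which would spoil uniqueness; you need either strict containment or an explicit location of the repelling fixed point outside $I$. The paper avoids both issues by computing $W$ and its fixed points directly. If you want to salvage the abstract route, the cleanest fix is to prove what the paper proves implicitly: that $\phi$-cycles have length at most~$2$. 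Once you know that, the explicit identification of $W$ is immediate and the remaining steps of your outline go through.
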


Before proving \Cref{thm system solution}, it is instructive to work through one example of solving the system
\< \label{system} G_{\sigma(i)} = \left\{ \begin{array}{ll} 
	T_{i} G_{i-2} &\text{if }A_i = P_i \\
	T_{\tau(i)+1} G_{\tau(i)} &\text{if }A_i = Q_i
\end{array} \right. \>
by hand. For this worked example, we consider the genus-$2$ parameter choice
\< \label{example A} \A = \{ P_1, P_2, P_3, P_4, Q_5, P_6, Q_7, Q_8, P_9, P_{10}, Q_{11}, Q_{12} \}. \>
We want to find the values $G_1,\ldots,G_{12}$ described by \Cref{thm system solution}. Applying~\eqref{system} with $i=1,\ldots,8g-4$ gives the following system of twelve equations:
\renewcommand\temp{\text{, we need }}
\begin{enumerate}[\qquad1. {Because\!}]
	\item $A_1 = P_1 \temp G_7 = T_1 G_{11}$.
	\item $A_2 = P_2 \temp G_{12} = T_2 G_{12}$.
	\item $A_3 = P_3 \temp G_5 = T_3 G_1$.
	\item $A_4 = P_4 \temp G_{10} = T_4 G_2$.
	\item $A_5 = Q_5 \temp G_3 = T_{12} G_{11}$.
	\item $A_6 = P_6 \temp G_8 = T_6 G_4$.
	\item $A_7 = Q_7 \temp G_1 = T_{2} G_{1}$.
	\item $A_8 = Q_8 \temp G_6 = T_{3} G_{2}$.
	\item $A_9 = P_9 \temp G_{11} = T_9 G_7$.
	\item $A_{10} = P_{10} \temp G_4 = T_{10} G_8$.
	\item $A_{11} = Q_{11} \temp G_9 = T_{6} G_{5}$.
	\item $A_{12} = Q_{12} \temp G_2 = T_{7} G_{6}$.
\end{enumerate}

Line~1 is not immediately useful since neither $G_7$ nor $G_{11}$ is known. Line~2 tells us that $G_{12}$ is a fixed point of $T_2$. The fixed points of $T_2$ are $P_1$ and $Q_2$, and since we require $G_i \in [P_i,P_{i+1}]$, we must use $G_{12} = P_1$. Likewise, Line~7 gives us that $G_1 = P_1$ as well.
Knowing that $G_1 = P_1$, we can use Line~3 to get $G_5 = T_3P_1$ and then Line~11 to get $G_9 = T_6G_5 = T_6T_3P_1$.

Combing Line~9 with Line~1 tells us that $G_7 = T_1(T_9G_7)$, and so $G_7$ is a fixed point of $T_1T_9$, whose fixed points are $P_8$ and $Q_9$, of which only $G_7 = P_8$ fulfills the requirement $G_i \in [P_i,P_{i+1}]$. Likewise $P_{12}$ is the only valid choice for $G_{11}$ from the two fixed points $P_{12}$ and $Q_1$ of $T_9T_1$. Knowing that $G_{11} = P_{12}$, Line~5 gives us $G_3 = T_{12}G_{11} = T_{12}P_{12} = Q_3$.

The remaining $G_i$ values are determined by the same means. \Cref{fig example graph} shows the system of twelve equations above as a graph, where the $G_i$ are vertices and each equation from the system is an edge. Vertices with a loop are fixed points of some $T_k$, and pairs of vertices in a two-cycle are fixed points of some $T_{\sigma(k)}T_{k+2}$. All other vertices connect through a single path (a ``chain'') to either a fixed point of some $T_k$ or a fixed point of some $T_{\sigma(k)}T_{k+2}$.

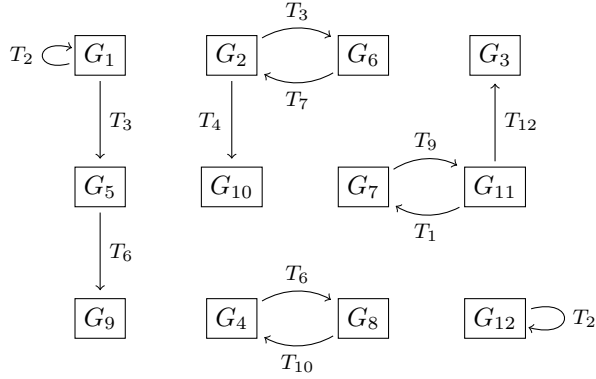
\begin{figure}[htb]
	\begin{tikzpicture}[node distance = 1.75cm]
		\tikzstyle{box}=[draw,rectangle]
		\tikzstyle{arrow}=[->, shorten < = 2pt, shorten > = 3pt, black]
   		\tikzstyle{answer}=[black!33]
    	
		\draw [white] (-2,-4.5) rectangle (7.5,1);
        \node [box] (g1) {$G_1$};
        \node [box,right of = g1] (g2) {$G_2$};
        \node [box,below of = g1] (g5) {$G_5$};
        \node [box,right of = g2] (g6) {$G_6$};
        \node [box,below of = g5] (g9) {$G_9$};
        \node [box,below of = g2] (g10) {$G_{10}$};
        \node [box,right of = g6] (g3) {$G_3$};
        \node [box,below of = g3] (g11) {$G_{11}$};
        \node [box,below of = g10] (g4) {$G_4$};
        \node [box,left of = g11] (g7) {$G_7$};
        \node [box,right of = g4] (g8) {$G_8$};
        \node [box,right of = g8] (g12) {$G_{12}$};
        
        \let\sz\footnotesize
        \path [arrow] (g1) edge node [right] {\sz$T_{3}$} (g5);
        \path [arrow] (g5) edge node [right] {\sz$T_{6}$} (g9);
        \path [arrow] (g2) edge node [left] {\sz$T_{4}$} (g10);
        \path [arrow] (g11) edge node [right] {\sz$T_{12}$} (g3);
        \path [arrow] (g1) edge [loop left] node {\sz$T_{2}$} (g1);
        \path [arrow] (g12) edge [loop right] node {\sz$T_{2}$} (g12);
        \path [arrow] (g2) edge [bend left] node [above] {\sz$T_{3}$} (g6);
        \path [arrow] (g6) edge [bend left] node [below] {\sz$T_{7}$} (g2);
        \path [arrow] (g7) edge [bend left] node [above] {\sz$T_{9}$} (g11);
        \path [arrow] (g11) edge [bend left] node [below] {\sz$T_{1}$} (g7);
        \path [arrow] (g4) edge [bend left] node [above] {\sz$T_{6}$} (g8);
        \path [arrow] (g8) edge [bend left] node [below] {\sz$T_{10}$} (g4);
        
	\end{tikzpicture}
	\caption{The graph representing the system of equations~\eqref{system} for the example parameters~$\A$ in~\eqref{example A}.}
	\label{fig example graph}
\end{figure}

After resolving all loops, two-cycles, and chains in \Cref{fig example graph}, we get that the values $G_i$ from \Cref{thm system solution} for the specific example $\A$ in~\eqref{example A} are
\begin{align}
	G_1&=P_1  &  G_4&=P_5     &  G_7&=P_8        &  G_{10}&=T_4P_2 \nonumber \\*
	G_2&=P_2  &  G_5&=T_3P_1  &  G_8&=P_9        &  G_{11}&=P_{12} \label{example G} \\*
	G_3&=Q_3  &  G_6&=P_6     &  G_9&=T_6T_3P_1  &  G_{12}&=P_1. \nonumber
\end{align}

\label{good paragraph}The proof of \Cref{thm system solution} presented in the remainder of this \namecref{sec bijectivity} shows that the graphs for any extremal $\A$ in any genus always a structure similar to the graph in \Cref{fig example graph}: there are loops and two-cycles that lead directly to values for certain $G_i$, and there are chains connecting unambiguously to a cycle, which determine the remaining $G_i$. The ``types'' in \Cref{def types} provide a rigorous way to describe whether a particular $G_i$ is part of a cycle (Types~\ref{type-P-loop} and~\ref{type-Q-loop}) or part of a chain (Types~\ref{type-PQ} and~\ref{type-QP}), and \Cref{lem infinite +4g} can be interpreted to mean that all chains must end (in either a loop or a two-cycle).

\begin{samepage}
\begin{defn} \label{def types} We say that an index $i$ is
\begin{enumerate}[\quad\text{-- ``Type }1\text{'' if}]
	\item \label{type-P-loop} $A_{\sigma(i)} = P_{\sigma(i)}$ and $A_{i+2} = P_{i+2}$,
	\item \label{type-PQ} $A_{\sigma(i)} = P_{\sigma(i)}$ and $A_{i+2} = Q_{i+2}$,
	\item \label{type-Q-loop} $A_{\sigma(i)} = Q_{\sigma(i)}$ and $A_{\tau(i)} = Q_{\tau(i)}$,
	\item \label{type-QP} $A_{\sigma(i)} = Q_{\sigma(i)}$ and $A_{\tau(i)} = P_{\tau(i)}$.
\end{enumerate}
Note that each index is of exactly one type.
\end{defn}
\end{samepage}

\begin{lem} \label{lem infinite +4g}
	It cannot be that $i + 4gn$ is Type~\ref{type-PQ} for all $n \ge 0$. It cannot be that $i - 4gn$ is Type~\ref{type-QP} for all $n \ge 0$.
\end{lem}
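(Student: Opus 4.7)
The plan is to derive a contradiction in each case by showing that the two conditions imposed by Type~\ref{type-PQ} (resp.\ Type~\ref{type-QP}) force some single index $j$ to satisfy both $A_j = P_j$ and $A_j = Q_j$, which is impossible since $P_j \ne Q_j$.

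First I would record two elementary identities modulo $8g-4$, both checked by splitting on the parity of $j$: from \eqref{sigma}, $\sigma(j+4g) \equiv \sigma(j) - 4g$, and from \eqref{tau}, $\tau(j+4g) \equiv \tau(j) + 4g$. Since $\gcd(4g,\, 8g-4) = 4$ for $g \ge 2$, the subgroup $H := \langle 4g \rangle \subset \Z/(8g-4)\Z$ is exactly the subgroup of multiples of $4$, of index $4$ and order $2g-1$. Consequently every orbit $\{j + 4gn \bmod (8g-4) : n \ge 0\}$ fills out the entire coset $j + H$, and the same holds for $\{j - 4gn\}$.

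Now assume $i + 4gn$ is Type~\ref{type-PQ} for every $n \ge 0$. Unpacking the definition via the first identity, $A_\ell = P_\ell$ for every $\ell \in \sigma(i) + H$ while $A_\ell = Q_\ell$ for every $\ell \in (i+2) + H$. A short parity check shows that $\sigma(i) - (i+2)$ is divisible by $4$ in both parities: for odd $i$ it equals $4g - 2(i+1)$ and for even $i$ it equals $-2i$. Hence the two cosets coincide, producing an index $j$ with $A_j = P_j = Q_j$, contradicting $P_j \ne Q_j$.

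The second half is entirely parallel: if $i - 4gn$ is Type~\ref{type-QP} for every $n \ge 0$, the same identities (applied with $-n$ in place of $n$) give $A_\ell = Q_\ell$ on $\sigma(i) + H$ and $A_\ell = P_\ell$ on $\tau(i) + H$. Another parity check (odd $i$: $\sigma(i) - \tau(i) = 2(1-i)$; even $i$: $\sigma(i) - \tau(i) = 4 - 2i - 4g$) shows $\sigma(i) - \tau(i) \equiv 0 \pmod 4$, so again the two cosets agree and force $A_j = Q_j = P_j$. The only delicate step in the whole argument is the parity case analysis, but each case reduces to the fact that $4g$ is a multiple of $4$ together with the observation that $2(i+1)$ (odd case) or $2i$ (even case) is also a multiple of $4$.
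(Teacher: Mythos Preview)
Your argument is correct and is a genuinely cleaner route than the paper's. Both proofs ultimately produce an index $j$ with $A_j = P_j$ and $A_j = Q_j$, but they locate it differently. The paper observes that such a collision happens precisely when $\sigma(i+4gn) = (i+4gn)+2$, i.e.\ when $i+4gn$ is a multiple of $2g-1$, and then explicitly manufactures an $n$ achieving this via a case split on the parity of $i$ with a free parameter $K$. You instead recognise that the two families of constraints (the $P$-constraints at $\sigma(i+4gn)$ and the $Q$-constraints at $(i+4gn)+2$) each sweep out a full coset of the subgroup $H = 4\Z/(8g-4)\Z$, and a short parity check shows these cosets coincide, so any element of the common coset gives the contradiction. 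Your version avoids the somewhat ad hoc construction of $n$ and makes the underlying group-theoretic reason (everything lives in the same residue class mod $4$) transparent; the paper's version has the minor advantage of being entirely explicit. Both the identity $\sigma(j+4g)\equiv\sigma(j)-4g$ and the parity computations you cite are correct as stated.
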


\begin{proof}
Assume that $i + 4gn$ is Type~\ref{type-PQ} for all $n \ge 0$. That is, $A_{\sigma(i + 4gn)} = P_{\sigma(i + 4gn)}$ and $A_{(i + 4gn) + 2} = Q_{(i + 4gn) + 2}$ for all $n \ge 0$. Since $\P$ and $\bar Q$ are disjoint, this requires $\sigma(i + 4gn) \not\equiv i + 4gn + 2~({\rm mod}~{8g-4})$ for all $n \ge 0$. 
By direct calculation, 
\[ \sigma(j) \equiv j+2 \quad\iff\quad j \in \{2g-1,4g-2,6g-3,8g-4\}. \]
Since we are working mod $8g-4$, we can say that $\sigma(j) \equiv j+2$ whenever $j$ is \textit{any} integer multiple of $2g-1$. Thus $i+4gn$ is not a multiple of $2g-1$ for any $n$.

\medskip
If $i$ is even, let $n = K(2 g - 1) - \tfrac i2$ with $K$ large enough that $n > 0$. Then
\begin{align*}
    i + 4gn 
    &= i + 4g \big( 2Kg - K - \tfrac i2 \big) \\
    &= i + 8Kg^2 - 4Kg - 2gi \\
    &= (2g-1)(4 K g - i).
\end{align*}
If $i$ is odd, let $n = K(2g-1) + (g - 1) - \tfrac{i - 1}2$ with $K$ large enough that $n > 0$. Then
\begin{align*}
    i + 4gn 
    &= i + 4g \big( 2Kg-K + g - 1 - \tfrac {i-1}2 \big) \\
    &= i + 8Kg^2 - 4Kg + 4g^2 - 2g - 2gi \\
    &= (2g-1)(4Kg + 2g - i).
\end{align*}
In either case, there exists $n>0$ such that $i+4gn$ is a multiple of $2g-1$.
As shown above, the assumption that $i + 4gn$ is Type~\ref{type-PQ} for all $n \ge 0$ implies that $i+4gn$ is not a multiple of $2g-1$ for any $n$. This contradiction shows that $i+4gn$ cannot be Type~\ref{type-PQ} for all $n\ge0$.

Similar arguments give a contradiction if $i-4gn$ is Type~\ref{type-QP} for all $n \ge 0$.
\end{proof}

\begin{lem} \label{lem range}
	If $x \in [P_j,P_{j+1}]$ then $T_{\tau\sigma(j)+2} T_{j+1} x \in [P_{\tau(j)-2}, P_{\tau(j)-1}]$.
\end{lem}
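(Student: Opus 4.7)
My plan is to verify the claim by tracking the images of the two endpoints $P_j$ and $P_{j+1}$ under the composition $T_{\tau\sigma(j)+2}\circ T_{j+1}$, and then concluding by the fact that an orientation-preserving M\"obius transformation of $\D$ preserves cyclic order on $\Sb$; once both endpoints land in the target arc in the correct rotational direction, the full interval must also lie in $[P_{\tau(j)-2},P_{\tau(j)-1}]$.

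The calculations hinge on two modular-arithmetic identities which I would first verify by a short case analysis on the parity of $j$ (using that $\sigma$ preserves parity of indices, that $\tau$ shifts by the even number $4g-2$, and that $8g-4\equiv 0\pmod 4$):
\[
\sigma(j+1)+1 \equiv \tau\sigma(j),
\qquad
\sigma(\tau\sigma(j)+2) \equiv \tau(j)-2 \pmod{8g-4}.
\]
Combined with the side-pairing formulas $T_m(P_m)=Q_{\sigma(m)+1}$ and $T_m(Q_{m+1})=P_{\sigma(m)}$, these yield $T_{j+1}(P_{j+1})=Q_{\tau\sigma(j)}$ and $T_{\tau\sigma(j)+2}(Q_{\tau\sigma(j)+3})=P_{\tau(j)-2}$, which already pin down one endpoint of the image arc. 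For the coarse containment, I would observe that $T_{j+1}$ sends the short ``exterior'' arc $[P_{j+1},Q_{j+2}]$ of side $j+1$ bijectively onto the long complementary arc, so it sends the complementary long arc (which contains $[P_j,P_{j+1}]$) into the short arc $[P_{\tau\sigma(j)-1},Q_{\tau\sigma(j)}]$. This image arc is disjoint from the exterior arc $[P_{\tau\sigma(j)+2},Q_{\tau\sigma(j)+3}]$ of side $\tau\sigma(j)+2$, so the same exterior/complementary argument applied to $T_{\tau\sigma(j)+2}$ lands the final image in $[P_{\tau(j)-2},Q_{\tau(j)-1}]$.

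The remaining task, and what I expect to be the main obstacle, is sharpening this containment from $[P_{\tau(j)-2},Q_{\tau(j)-1}]$ to the tighter $[P_{\tau(j)-2},P_{\tau(j)-1}]$: we must rule out that the image leaks into the sub-arc $(P_{\tau(j)-1},Q_{\tau(j)-1}]$. The obstacle is that $P_j$ is not one of the ``standard'' points $P_m$ or $Q_{m+1}$ for the generator $T_{j+1}$, so no direct side-pairing formula determines $T_{j+1}(P_j)$. My intended tool is the cycle relation at the vertex $V_{j+1}$, namely $T_{\sigma(j)}T_{\tau(j)+1}T_{\tau\sigma(j)}T_{j+1}=\Id$, which comes from the identities $\rho(j+1)=\tau\sigma(j)$, $\rho^2(j+1)=\tau(j)+1$ and $\rho^3(j+1)=\sigma(j)$; rearranging and applying the standard side-pairing formulas inside this 4-fold product should pin down $T_{j+1}(P_j)$ and its image under $T_{\tau\sigma(j)+2}$ precisely enough to locate the second endpoint of the image arc at (or no further than) $P_{\tau(j)-1}$, at which point M\"obius monotonicity on the arc closes the argument.
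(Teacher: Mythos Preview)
Your plan works, but it takes a longer path than the paper's. What you flag as the ``main obstacle''---determining $T_{j+1}(P_j)$---is in fact a standard identity: the extension condition (equivalently, the very cycle relation you cite at $V_{j+1}$) yields $T_mP_{m-1}=P_{\tau\sigma(m-1)}$ for every $m$, so $T_{j+1}P_j=P_{\tau\sigma(j)}$ directly. This formula is part of the usual table of images of neighbouring $P$'s and $Q$'s under the generators (see \cite[Prop.~2.2]{KU17}, invoked elsewhere in the paper). With it in hand the paper computes the first map \emph{exactly},
\[
T_{j+1}[P_j,P_{j+1}]=[P_k,Q_k],\qquad k:=\tau\sigma(j),
\]
and then performs a single enlargement \emph{before} the second map rather than a coarse bound followed by a sharpening: from $[P_k,Q_k]\subset[Q_{k+3},P_{k+1}]$ and monotonicity,
\[
T_{k+2}[P_k,Q_k]\subset[T_{k+2}Q_{k+3},\,T_{k+2}P_{k+1}]=[P_{\sigma(k)-2},P_{\sigma(k)-1}]=[P_{\tau(j)-2},P_{\tau(j)-1}],
\]
which is already the sharp target interval, so no separate tightening step is needed. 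Note that $T_{k+2}P_{k+1}=P_{\sigma(k)-1}$ is the \emph{same} identity $T_mP_{m-1}=P_{\tau\sigma(m-1)}$, now applied with $m=k+2$.

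Your route---coarse containment first, then the cycle relation to tighten---would eventually converge to the same computation, but once you extract $T_{j+1}(P_j)=P_k$ you still face the value $T_{k+2}(P_k)$, which is again nonstandard for the two basic formulas you list; you would need either a second cycle relation (at $V_{k+2}$) or precisely the enlargement argument above. In short: add $T_mP_{m-1}=P_{\tau\sigma(m-1)}$ to your ``standard'' toolkit and the obstacle evaporates, reducing the proof to a two-line endpoint computation plus one containment.
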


\begin{proof}
Firstly,
\[ T_{j+1}[P_j,P_{j+1}] = [T_{j+1}P_j, T_{j+1}P_{j+1}] = [P_{\tau\sigma(j)}, Q_{\tau\sigma(j)}]. \]
Setting $k = \tau\sigma(j)$, we now want to show that $T_{k+2}[P_k,Q_k] \subset [P_{\sigma(k)-2}, P_{\sigma(k)-1}]$.
For any $k$, we have ``$[P_k,Q_k] \subset [Q_{k+3},P_{k+1}]$'' in the sense that moving counter-clockwise around the circle one has $Q_{k+3},P_k,Q_k,P_{k+1}$ in that order. Applying $T_{k+2}$, which is monotonically ``increasing'' (counter-clockwise) on $\Sb$, we get
\[ T_{k+2}[P_k,Q_k] \subset [T_{k+2}Q_{k+3},T_{k+2}P_{k+1}] =[P_{\sigma(k)-2},P_{\sigma(k)-1}] \]
as desired.
\end{proof}

\begin{proof}[Proof of \Cref{thm system solution}]
We will construct the solution to the system~\eqref{system}, that is,
\[ G_{\sigma(i)} = \left\{\begin{array}{ll} T_iG_{i-2} &\text{if }A_i=P_i \\ T_{\tau(i)+1}G_{\tau(i)} &\text{if }A_i=Q_i, \end{array}\right. \]
with $G_i \in [P_i,P_{i+1}]$.

If $i$ is Type~\ref{type-P-loop}, then~\eqref{system} tells us that
\[ T_{\sigma(i)}G_{\sigma(i)-2} = G_{i} \]
because $A_{\sigma(i)} = P_{\sigma(i)}$, and because $A_{i+2} = P_{i+2}$  we have
\[ T_{i+2}G_{i} = G_{\sigma(i)-2}. \]
Together, these imply that $G_i$ is a fixed point of $T_{\sigma(i)}T_{i+2}$. The fixed points of that map are $P_{i+1}$ and $Q_{i+2}$, and since we require $G_i \in [P_i,P_{i+1}]$ it must be that $G_i = P_{i+1}$.

\medskip
If $i$ is Type~\ref{type-Q-loop}, then
\begin{align*}
	T_{\tau\sigma(i)+1}G_{\tau\sigma(i)} &= G_{i} \\*
	T_{i+1}G_{i} &= G_{\tau\sigma(i)},
\end{align*}
and so $G_i$ must be the fixed point $P_i$ of $T_{\tau\sigma(i)+1}T_{i+1}$.

\medskip
If $i$ is Type~\ref{type-PQ}, the condition $A_{\sigma(i)} = P_{\sigma(i)}$ implies
\[
    G_{i} = T_{\sigma(i)}G_{\sigma(i)-2},
\]
and the condition $A_{i+2} = Q_{i+2}$ implies
\begin{align*}
    G_{\sigma(i+2)} &= T_{\tau(i+2)+1}G_{\tau(i+2)} \\*
    G_{\sigma(i)-2} &= T_{\tau(i)+3}G_{\tau(i)+2}.
\end{align*}
Using that $\tau(i) = i + 4g-2$, we have 
\< \label{i is Type 2}
	i\text{ is Type~\ref{type-PQ}} \quad\implies\quad
	G_{i} = T_{\sigma(i)} T_{\tau(i)+3}G_{i + 4g}.
\>
Consider the possible types of $i+4g$.
\begin{itemize}
	\item If $i+4g$ is Type~\ref{type-P-loop}, then $G_{i+4g} = P_{i+4g+1}$, and by \Cref{lem range} we \\ get $G_i \in [P_i,P_{i+1}]$.
	\item If $i+4g$ is Type~\ref{type-Q-loop}, then $G_{i+4g} = P_{i+4g}$, and by \Cref{lem range} we \\ get $G_i \in [P_i,P_{i+1}]$. 
	\item If $i+4g$ is Type~\ref{type-QP}, then $A_{\tau(i+4g)} = A_{i+2} = P_{i+2}$, which contradicts $i$ being Type~\ref{type-PQ}. So this cannot occur.
	\item If $i+4g$ is Type~\ref{type-PQ}, the condition $A_{\sigma(i+4g)} = A_{\tau\sigma(i)-2} = P_{\tau\sigma(i)-2}$ gives
\[ 
	G_{i+4g} = T_{\tau\sigma(i)-2} G_{\tau\sigma(i)-4}
\]
and the condition $A_{\tau(i)+4} = Q_{\tau(i)+4}$ gives
\[
	G_{\sigma\tau(i)-4} = T_{i+5} G_{i+4} = T_{i+5} G_{i+8g}.
\]
Combining these with~\eqref{i is Type 2}, we get 
\< \label{i and i+4g are Type 2}
	G_{i} = T_{\sigma(i)} T_{\tau(i)+3} T_{\tau\sigma(i)-2} T_{i+5} (G_{i+8g}).
\>
\end{itemize}

Notice that~\eqref{i is Type 2} gives $G_i$ in terms of $G_{i+4g}$, and now assuming $i+4g$ is Type~\ref{type-PQ} as well we have $G_i$ in terms of $G_{i+8g}$ from~\eqref{i and i+4g are Type 2}. Inductively, we continue to get expressions for $G_i$ in terms of $G_{i+4gn}$ so long as $G_{i+4g(n-1)}$ is Type~\ref{type-PQ}. As with the third bullet above, any $i+4gn$ being Type~\ref{type-QP} will cause an immediate contradiction, and by \Cref{lem infinite +4g} we cannot have $i+4gn$ be Type~\ref{type-PQ} for all $n \ge 0$. Thus for some $n$ we must have $i+4gn$ of Type~\ref{type-P-loop} or~\ref{type-Q-loop}, at which point $G_{i+4gn}$ will be $P_{i+4gn+1}$ or $P_{i+4gn}$, respectively, and inductively this leads to a value for $G_i$ that by \Cref{lem range} will be in $[P_i,P_{i+1}]$.

\medskip
The case where $i$ is Type~\ref{type-QP} is handled similarly to Type~\ref{type-PQ}: we cannot have $i, i-4g, i-8g, i-12g$, etc., all be Type~\ref{type-QP}, none of them can be Type~\ref{type-PQ} without contradiction, and once we have $i-4gn$ being Type~\ref{type-P-loop} or Type~\ref{type-Q-loop} we inductively get a unique value for $G_{i-4gk}$ in $[P_{i-4gk},P_{i-4gk+1}]$ for $k = n,n-1,\ldots,0$.
\end{proof}

\begin{lem} \label{lem DGH properties}
	Let $G_1, G_2, \ldots, G_{8g-4}$ be from \Cref{thm system solution}, and denote 
	\< \label{defn H D}
		H_i := U_i G_{\tau(i)-1}
		\qquad\text{and}\qquad
		D_i := T_{\tau\sigma(i)+1} G_{\tau\sigma(i)},
	\>
	where
	\[ \label{defn U}
		U_i := T_{\sigma(i-1)}T_{\tau(i)} = T_{\sigma(i)}T_{\tau(i)-1}.
	\]
	Then the following hold:
	\begin{enumerate}[\quad 1)]
		\item $D_i = T_{\sigma(i)}H_{\sigma(i)+1}$,
		\item $D_i \in [P_i,Q_i]$,
		\item $H_i \in [Q_i,Q_{i+1}]$.
	\end{enumerate}
\end{lem}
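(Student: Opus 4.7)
My plan is to prove (1) first by direct manipulation of $U$ and $\sigma$, to obtain (2) by combining \Cref{thm system solution} with two endpoint calculations under $T_{\tau\sigma(i)+1}$, and finally to reduce (3) to (2) via (1) plus one non-trivial computation of $T_k(Q_k)$.

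For (1), expand $H_{\sigma(i)+1} = U_{\sigma(i)+1} G_{\tau(\sigma(i)+1)-1}$ using the first form $U_j = T_{\sigma(j-1)}T_{\tau(j)}$ with $j = \sigma(i)+1$. Since $\sigma$ is an involution, $\sigma(j-1) = i$, and $\tau(j)-1 = \tau\sigma(i)$, so $H_{\sigma(i)+1} = T_i T_{\tau\sigma(i)+1} G_{\tau\sigma(i)} = T_i D_i$; multiplying by $T_{\sigma(i)}$ and using $T_{\sigma(i)}T_i = \Id$ gives (1). For (2), \Cref{thm system solution} places $G_{\tau\sigma(i)} \in [P_{\tau\sigma(i)}, P_{\tau\sigma(i)+1}]$, and since $T_{\tau\sigma(i)+1}$ preserves the counter-clockwise orientation on $\Sb$, it suffices to identify the images of the two endpoints. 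The formula $T_{k+1}(P_k) = P_{\tau\sigma(k)}$ (extracted from the proof of \Cref{lem range}), applied at $k = \tau\sigma(i)$ together with $(\tau\sigma)^2 = \Id$, gives $T_{\tau\sigma(i)+1}(P_{\tau\sigma(i)}) = P_i$; and $T_m(P_m) = Q_{\sigma(m)+1}$ at $m = \tau\sigma(i)+1$, combined with $\sigma(\tau\sigma(i)+1) = i-1$ (a consequence of $\rho^2 = \tau$ and $\rho^4 = \Id$), gives $T_{\tau\sigma(i)+1}(P_{\tau\sigma(i)+1}) = Q_i$. Hence the image interval is $[P_i, Q_i]$, yielding (2).

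For (3), reuse (1) in the form $H_i = T_{\sigma(i-1)} D_{\sigma(i-1)}$ (replace $i$ by $\sigma(i-1)$ in (1) and simplify using involutivity). Setting $k = \sigma(i-1)$, so that $\sigma(k) = i-1$, by (2) we have $D_k \in [P_k, Q_k]$, and by orientation-preservation of $T_k$ it suffices to prove $T_k(P_k) = Q_i$ and $T_k(Q_k) = Q_{i+1}$. The first is $T_k(P_k) = Q_{\sigma(k)+1} = Q_i$. The main obstacle, and the crux of the whole lemma, is the second identity, since no side-pairing formula applies directly to $T_k(Q_k)$. The trick I propose is to first prove the auxiliary identity $Q_k = U_k P_{\tau(k)-1}$: expand $U_k = T_{\sigma(k-1)}T_{\tau(k)}$, apply $T_{\tau(k)}(P_{\tau(k)-1}) = P_{\tau\sigma(k)+1}$, then use $\sigma(k-1) = \tau\sigma(k)+1$ (from $\rho^3 = \rho^{-1}$) together with $T_m(P_m) = Q_{\sigma(m)+1}$ to arrive at $Q_k$. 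With this in hand, the second form $U_k = T_{\sigma(k)}T_{\tau(k)-1}$ combined with $T_k T_{\sigma(k)} = \Id$ collapses $T_k U_k$ to $T_{\tau(k)-1}$, so $T_k(Q_k) = T_{\tau(k)-1}(P_{\tau(k)-1}) = Q_{\sigma(\tau(k)-1)+1} = Q_{\sigma(k)+2} = Q_{i+1}$, using $\sigma(\tau(k)-1) = \sigma(k)+1$ (from $\rho^2 = \tau$). Orientation-preservation of $T_k$ then gives $T_k[P_k,Q_k] = [Q_i, Q_{i+1}]$, which contains $T_k D_k = H_i$, proving (3).
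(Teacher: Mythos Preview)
Your proof is correct. Parts (1) and (2) match the paper's argument essentially verbatim: for (1) both you and the paper expand $U_{\sigma(i)+1}$ via the first expression $T_{\sigma(j-1)}T_{\tau(j)}$ and cancel $T_{\sigma(i)}T_i$, and for (2) both apply $T_{\tau\sigma(i)+1}$ to the interval $[P_{\tau\sigma(i)}, P_{\tau\sigma(i)+1}]$ containing $G_{\tau\sigma(i)}$ and identify the endpoints as $P_i$ and $Q_i$.

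For (3) you take a slightly different route. The paper simply applies $U_i = T_{\sigma(i)}T_{\tau(i)-1}$ directly to $[P_{\tau(i)-1}, P_{\tau(i)}]$, obtaining in one line
\[
U_i[P_{\tau(i)-1},P_{\tau(i)}] = [T_{\sigma(i)}Q_{\sigma(i)+2},\, T_{\sigma(i)}P_{\sigma(i)}] = [Q_i, Q_{i+1}],
\]
using only the standard side-pairing identities $T_m P_m = Q_{\sigma(m)+1}$, $T_{m} P_{m+1} = P_{\sigma(m)}$, and their inverses. You instead pass through (1) to rewrite $H_i = T_k D_k$ with $k=\sigma(i-1)$, invoke (2), and then compute $T_k[P_k,Q_k]$; the endpoint $T_k Q_k$ forces you to establish the auxiliary identity $Q_k = U_k P_{\tau(k)-1}$ before collapsing $T_kU_k$ to $T_{\tau(k)-1}$. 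This is valid, and it has the conceptual appeal of deriving (3) from (1) and (2), but it is longer than the paper's direct computation and ultimately recovers the same identity $T_kQ_k = Q_{\sigma(k)+2}$ that the paper uses implicitly (as the inverse of $T_{\sigma(i)}Q_{\sigma(i)+2}=Q_i$).
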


\begin{proof}
	The first item follows directly from the definitions of $H_k$, $U_k$, and $D_i$:
	\begin{align*}
    	T_{\sigma(i)} H_{\sigma(i)+1} 
		&= T_{\sigma(i)} U_{\sigma(i)+1} G_{\tau(\sigma(i)+1)-1} 
		= T_{\sigma(i)} T_{i} T_{\tau\sigma(i)+1} G_{\tau\sigma(i)} \\*
		&= T_{\tau\sigma(i)+1} G_{\tau\sigma(i)} 
		= D_i.
	\end{align*}
	The latter items follow from simple calculations and the fact that $G_k \in [P_k,P_{k+1}]$ for all $k$:
	\begin{align*}
		D_i &\in T_{\tau\sigma(i)+1}[P_{\tau\sigma(i)}, P_{\tau\sigma(i)+1}] = [P_i, Q_i], \\
		H_i &\in U_i[P_{\tau(i)-1},P_{\tau(i)}] = [T_{\sigma(i)}T_{\tau(i)-1}P_{\tau(i)-1}, T_{\sigma(i)}T_{\tau(i)-1}P_{\tau(i)}] \\*&\qquad\qquad= [T_{\sigma(i)} Q_{\sigma(i)+2}, T_{\sigma(i)} P_{\sigma(i)}] = [Q_i,Q_{i+1}].
		\qedhere
	\end{align*}
\end{proof}

\medskip
We are now ready to prove one of the main results of this paper:
\begin{thm} \label{thm extremal bijectivity}
	For any extremal parameter choice $\A$, the set
	\< \label{Omega E defn} \Omega_\A := \bigcup_{i=1}^{8g-4} \; [H_{i+1},G_{i-2}] \!\times\! [P_i,Q_i] \;\cup\; [H_{i+1},G_{i-1}] \!\times\! [Q_i,P_{i+1}] \>
	is a bijectivity domain for $F_\A$, where $\{G_i\}$ are from \Cref{thm system solution} and $\{H_i\}$ are from Equation~\ref{defn H D}.
\end{thm}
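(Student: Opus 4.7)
The plan is to exhibit $\Omega_{\A}$ as both a horizontal rectangular tiling — the one given by its definition — and a vertical rectangular tiling consisting of the images of the horizontal pieces under $F_{\A}$, so that bijectivity follows from matching the two tilings.

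As a first step, I would label the $2(8g-4)$ defining horizontal rectangles
\[ R^A_i := [H_{i+1}, G_{i-2}] \times [P_i, Q_i], \qquad R^B_i := [H_{i+1}, G_{i-1}] \times [Q_i, P_{i+1}], \]
for $1 \le i \le 8g-4$, and record which M\"obius transformation $F_{\A}$ applies to each. Since $\A$ is extremal, the partition $\{[A_j, A_{j+1})\}$ of $\Sb$ has boundary points only among the $\{P_j, Q_j\}_j$, so each $R^A_i$ and $R^B_i$ lies inside a single element of this partition. A direct check shows that $T_i$ is applied on $R^B_i$ in all cases, while on $R^A_i$ the map $T_i$ is applied when $A_i = P_i$ and $T_{i-1}$ is applied when $A_i = Q_i$.

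Next I would compute the image rectangles explicitly by applying the selected transformation to the corner points. For the $w$-edges, the boundary identifications $T_i P_i = Q_{\sigma(i)+1}$, $T_i Q_{i+1} = P_{\sigma(i)}$, together with their $T_{i-1}$ analogues, guarantee that each image $w$-interval again has endpoints in $\{P_j, Q_j\}_j$. For the $u$-edges, one applies the same transformation to $H_{i+1}$, $G_{i-2}$, and $G_{i-1}$. The key observation is that the recursions defining $G_i$ in \Cref{thm system solution} — bifurcated precisely by whether $A_i = P_i$ or $A_i = Q_i$ — together with the definitions \eqref{defn H D} of $H_i$ and $D_i$, are engineered exactly so that each transformed $u$-coordinate again lies in $\{H_j, G_j, D_j\}_j$.

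The heart of the argument is verifying that the resulting vertical rectangles tile $\Omega_{\A}$ without overlap or gap. Within each target horizontal strip indexed by some $j$, I would identify the finite list of source rectangles whose image lands inside that strip, and check that the corresponding $u$-subintervals glue seamlessly into $[H_{j+1}, G_{j-2}]$ or $[H_{j+1}, G_{j-1}]$. The compatibility identity $D_i = T_{\sigma(i)} H_{\sigma(i)+1}$ from item~1 of \Cref{lem DGH properties} is essential, as it forces adjacent $u$-intervals to match at common boundary points, while items~2--3 place those boundary points correctly on $\Sb$. The main obstacle will be this local case analysis: the behavior near index $i$ depends on the neighboring extremal bits $(A_{i-1}, A_i, A_{i+1}, A_{i+2})$, yielding up to sixteen local configurations. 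After reducing via the Type classification of \Cref{def types}, each configuration collapses to a short identity provable using one of the two relations in system~\eqref{system}, and once the two tilings are identified, bijectivity is immediate.
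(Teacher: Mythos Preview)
Your proposal is correct and follows essentially the same approach as the paper: decompose $\Omega_{\A}$ into the horizontal rectangles (the paper calls your $R^A_i, R^B_i$ respectively $R_i'', R_i'$), compute their images under the appropriate $T_\ell$, and verify strip by strip that the images reconstitute $\Omega_{\A}$. One simplification you have not anticipated is that the images $T_i R^B_i = [D_{\sigma(i)}, D_{\sigma(i)+1}] \times [Q_{\sigma(i)+2}, P_{\sigma(i)-1}]$ are tall rectangles whose $u$-intervals telescope, so the sixteen-configuration case analysis via \Cref{def types} is unnecessary---the paper completes the verification without ever invoking the Type classification, needing only to track which of the two short image rectangles contributes at each end of a given horizontal strip.
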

\providecommand\FEL{F_\A(\Omega_\A)}

\begin{proof} 
Label the horizontal strips of $\Omega_\A$ as
\< \label{Omega E horizontal strips} \begin{split}
    R_i'' &:= [H_{i+1},G_{i-2}] \times [P_i,Q_i] \\
    R_i' &:= [H_{i+1},G_{i-1}] \times [Q_i,P_{i+1}],
\end{split} \>
so $\Omega_\A = \bigcup_{i=1}^{8g-4} (R_i'' \cup R_i')$.
From \Cref{lem DGH properties}(1), we have that $T_iH_{i+1} = D_{\sigma(i)}$, where $\{D_i\}$ are defined in~\eqref{defn H D}, so we compute that
\begin{align*}
    T_i R_i' 
    &= [T_iH_{i+1},T_iG_{i-1}] \times [T_iQ_i,T_iP_{i+1}] \\*
    &= [D_{\sigma(i)},D_{\sigma(i)+1}] \times [Q_{\sigma(i)+2},P_{\sigma(i)-1}].
\end{align*}
We also compute
\begin{align*}
    T_i R_i'' 
    &= [T_iH_{i+1}, T_iG_{i-2}] \times [T_iP_i, T_iQ_i] \\*
    &= [D_{\sigma(i)},T_iG_{i-2}] \times [Q_{\sigma(i)+1},Q_{\sigma(i)+2}] \\*
    &= [D_{\sigma(i)},G_{\sigma(i)}] \times [Q_{\sigma(i)+1},Q_{\sigma(i)+2}] \qquad\text{if }A_i=P_i
\end{align*} and \begin{align*}
    T_{i-1} R_i'' 
    &= [T_{i-1}H_{i+1}, T_{i-1}G_{i-2}] \times [T_{i-1}P_i, T_{i-1}Q_i] \\*
    &= [T_{i-1}H_{i+1},D_{\tau\sigma(i)+2}] \times [P_{\tau\sigma(i)},P_{\tau\sigma(i)+1}] \\*
    &= [H_{\tau\sigma(i)},D_{\tau\sigma(i)+2}] \times [P_{\tau\sigma(i)},P_{\tau\sigma(i)+1}] \qquad\text{if }A_i=Q_i.
\end{align*}
The image of $\Omega_\A$ under $F_\A$ is therefore
\begin{align*}
    \hspace*{-2em} \FEL
    &= \bigcup_{i=1}^{8g-4} T_\ell R_i'' \cup T_i R'_i, \quad\text{where } \ell = \big\{ \begin{smallmatrix} i\phantom{-1} &\text{if } A_i = P_i \\ i-1 &\text{if } A_i = Q_i \end{smallmatrix} \\
    &= \bigcup_{i=1}^{8g-4} T_iR_i' \;\;\cup \bigcup_{i:\: A_i = P_i}\!\! T_iR_i'' \;\;\cup \bigcup_{i:\: A_i = Q_i}\!\! T_{i-1}R_i'' \\
	&= \bigcup_{i=1}^{8g-4} [D_{\sigma(i)},D_{\sigma(i)+1}] \times [Q_{\sigma(i)+2},P_{\sigma(i)-1}] \\*
	&\qquad \cup \bigcup_{i:\: A_i = P_i}\!\! [D_{\sigma(i)},G_{\sigma(i)}] \times [Q_{\sigma(i)+1},Q_{\sigma(i)+2}] \\*
	&\qquad \cup \bigcup_{i:\: A_i = Q_i}\!\! [H_{\tau\sigma(i)},D_{\tau\sigma(i)+2}] \times [P_{\tau\sigma(i)},P_{\tau\sigma(i)+1}] \\
	&= \bigcup_{j=1}^{8g-4} [D_j,D_{j+1}] \times [Q_{j+2},P_{j-1}] \\*
	&\qquad \cup \bigcup_{i:\: A_i = P_i} [D_j,G_j] \times [Q_{j+1},Q_{j+2}], \qquad j=\sigma(i) \\*
	&\qquad \cup \bigcup_{i:\: A_i = Q_i} [H_k,D_{k+2}] \times [P_k,P_{k+1}], \qquad k=\tau\sigma(i).
\end{align*}

Note that the images $T_iR_i'$ are quite tall since $[Q_{j+2},P_{j-1}]$ contains more than half of $\Sb$. The images $T_\ell R_i''$ are not tall (regardless of whether $A_i$ is $P_i$ or $Q_i$).

Using the inclusions of \Cref{lem DGH properties}, we have
\< \label{image with conditions} \begin{split}
	\FEL 
	&= \bigcup_{j=1}^{8g-4} \underbrace{[D_j,D_{j+1}]}_{\subset\; [P_j,Q_{j+1}]} \times [Q_{j+2},P_{j-1}] \\*
	&\qquad \cup \bigcup_{i:\: A_i = P_i} \underbrace{[D_j,G_j]}_{\subset\; [P_j,P_{j+1}]} \times [Q_{j+1},Q_{j+2}], \qquad j=\sigma(i) \\*
	&\qquad \cup \bigcup_{i:\: A_i = Q_i} \underbrace{[H_k,D_{k+1}]}_{\subset\; [Q_k,Q_{k+1}]} \times [P_{k-1},P_k], \qquad k=\tau\sigma(i)+1.
\end{split} \>

\providecommand\BIGCUP{ \hspace{-0.8em}\bigcup_{\substack{j \ne m\!-\!2, j \ne m\!-\!1, \\ j \ne m,\, j \ne m\!+\!1}}\hspace{-1.333em} }

The right-hand side of~\eqref{image with conditions} has expressions for $3\times(8g-4)$ rectangles.\footnote{\,For any particular $\A$, only $2\times(8g-4)$ of the rectangles in~\eqref{image with conditions} will be part of $\FEL$ since each $i$ satisfies only one of $A_i=P_i$ or $A_i=Q_i$.} Which of these intersect $\Sb \times [P_m,Q_m]$ for a fixed $m$? Looking at the $y$-projection of these rectangles, we see that
\begin{itemize}
	\item $[Q_{j+2},P_{j-1})$ intersects $[P_m,Q_m]$ if and only if $j \notin \{m\!-\!2,\,m\!-\!1,\,m,\,m\!+\!1\}$;
	\item $(Q_{j+1},Q_{j+2}]$ intersects $[P_m,Q_m]$ exactly when $j=m-2$; 
	\item $[P_{k-1},P_k)$ intersects $[P_m,Q_m]$ exactly when $k=m+1$. 
\end{itemize}	
This gives us a smaller collection of rectangles whose union is contained in $\FEL$ and entirely contains the intersection of $\FEL$ with $\Sb \times [P_m,Q_m]$.
\begin{align*}
    \FEL \cap \big(\Sb \times [Q_m,P_{m+1}]\big)
    \subset \left(\begin{array}{l}
    	[D_{m-2},G_{m-2}] \times [Q_{m-1},Q_m] \\
    	\cup\; [H_{m+1},D_{m+2}] \times [P_m,P_{m+1}] \\
    	\displaystyle \cup \BIGCUP [D_j,D_{j+1}] \times [Q_{j+2},P_{j-1}]
	\end{array}\!\!\!\right)
	\subset \FEL.
\end{align*}
The rectangles on the right-hand side above can each be vertically truncated to contain only their intersection with $\Sb \times [P_m,Q_m]$, giving that
\begin{align*}
    &\hspace*{-2em} \FEL \cap \big(\Sb \times [Q_m,P_{m+1}]\big) \\
    &= [D_{m-2},G_{m-2}] \times [P_m,Q_m] 
    \cup [H_{m+1},D_{m+2}] \times [P_m,Q_m] \\*&\qquad
    \cup \BIGCUP [D_j,D_{j+1}] \times [P_m,Q_m] \\
    &= \left([D_{m-2},G_{m-2}]  
    \cup [H_{m+1},D_{m+2}] 
    \cup \BIGCUP [D_j,D_{j+1}] 
    \right) \times [P_m,Q_m] \\
    &= \left(
    [H_{m+1},D_{m+2}] 
    \cup \left(\begin{array}{l} [D_{m+2},D_{m+3}] \\ \cup [D_{m+3},D_{m+4}] \\ \cup \cdots \\ \cup [D_{m-3},D_{m-2}] \end{array}\right)
    \cup [D_{m-2},G_{m-2}]
    \right) \times [P_m,Q_m] \\
    &= [H_{m+1},G_{m-2}] \times [P_m,Q_m]
\end{align*}
This is exactly $R_m''$. Similar arguments show that $\FEL \cap \big(\Sb \times [Q_m,P_{m+1}]\big)$ is exactly $R_m'$, and therefore $\FEL = \bigcup_{m=1}^{8g-4} (R_m'' \cup R_m') = \Omega_\A$.
\end{proof}

\subsection{Conjugacy and cross-section} \label{sec conjugacy}

In~\cite[Section~5]{AF91}, a conjugacy between $F_\geo : \Omega_\geo \to \Omega_\geo$ and $F_{\bar P} : \Omega_{\bar P} \to \Omega_{\bar P}$ is provided, and in~\cite[Section~3]{AK19} similar techniques are used to construct a conjugacy between $F_\geo : \Omega_\geo \to \Omega_\geo$ and $F_\A : \Omega_\A \to \Omega_\A$ in the case where $\A$ satisfies the short cycle property. In this \namecref{sec conjugacy}, that result is extended to all extremal parameters.

First, we define bulges and corners:
\begin{defn} For $1 \le i \le 8g-4$, the \emph{lower bulge}~$\Bc_i$, the \emph{upper bulge}~$\Bc^i$, the \emph{lower corner}~$\Cc_i$, and the \emph{upper corner}~$\Cc^i$ are given by
\begin{equation*}
    \begin{split}
        \Bc_i &= (\Omega_\geo \setminus \Omega_\A) \;\cap\; \big([Q_{i+1},Q_{i+2}] \!\times\! [P_i,P_{i+1}]\big) \\
        \Bc^i &= (\Omega_\geo \setminus \Omega_\A) \;\cap\; \big([P_{i-1},P_i] \!\times\! [Q_i,Q_{i+1}]\big) \\
        \Cc_i &= (\Omega_\A \setminus \Omega_\geo) \;\cap\; \big([Q_{i+1},Q_{i+2}] \!\times\! [P_i,P_{i+1}]\big) \\
        \Cc^i &= (\Omega_\A \setminus \Omega_\geo) \;\cap\; \big([P_{i-1},P_i] \!\times\! [Q_i,Q_{i+1}]\big).
    \end{split}
\end{equation*}
Note that for a given extremal $\A$, some bulges or corners may have empty interiors.
\end{defn}

\def\PiMinusTwo{-1.42193} \def\PiMinusOne{-0.898333} \def\Pi{-0.374734} \def\PiPlusOne{0.148864} \def\PiPlusTwo{0.672463}
\def\QiMinusTwo{-1.19606} \def\QiMinusOne{-0.672463} \def\Qi{-0.148864} \def\QiPlusOne{0.374734} \def\QiPlusTwo{0.898333} \def\QiPlusThree{1.42193}
\def\BiMinusOne{-0.667399} \def\Bi{-0.0037245} \def\BiPlusOne{0.469153}
\def\CiMinusOne{-0.516315} \def\Ci{-0.108521} \def\CiPlusOne{0.353197} \def\BiPlusTwo{1.01321}
\def\GiMinusOne{-0.49} \def\Gi{0.12} \def\Hi{0.0} \def\HiPlusOne{0.6}  \def\HiPlusTwo{1.2}
\def\gap{0.523599} 
\renewcommand\temp{ \draw [out=4, in=-94] (\PiMinusTwo, \QiMinusOne) to +(\gap,\gap) to +(\gap,\gap) to [in=-130] +(0.8*\gap,0.4*\gap); \fill [white] (\PiMinusTwo, \QiMinusOne)+(0,-0.05) rectangle +(0.8*\gap,\gap); \draw [out=86, in=-176] (\Qi, \PiMinusOne) to +(\gap,\gap) to +(\gap,\gap) to +(\gap,\gap) node [above] {$\Omega_\geo$}; }
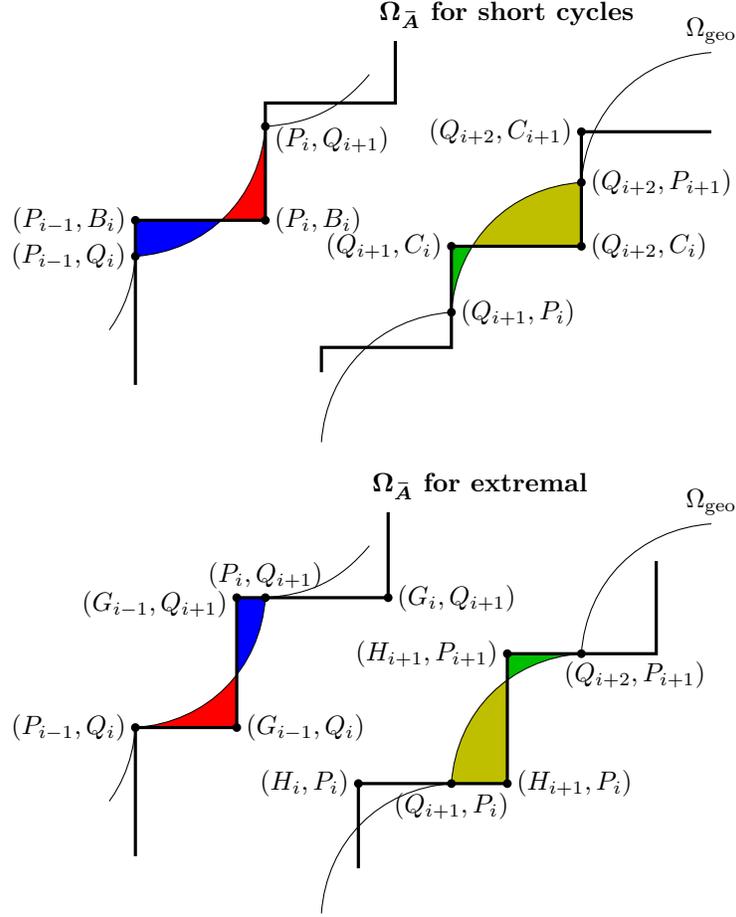
\begin{figure}[h!] 
\begin{tikzpicture}[scale=3.3]
	\begin{scope} \clip (\PiMinusOne,\Pi) rectangle (\Pi,\Bi); \fill [blue] [out=4, in=-94] (\PiMinusOne, \Qi) to (\Pi,\QiPlusOne) -- (\PiMinusOne,\QiPlusOne); \end{scope}
	\begin{scope} \clip (\PiMinusOne,\Bi) rectangle (\Pi,\QiPlusOne); \fill [red] [out=4, in=-94] (\PiMinusOne, \Qi) to (\Pi,\QiPlusOne) -- (\Pi,\Bi); \end{scope}
	\begin{scope} \clip (\QiPlusOne,\Pi) rectangle (\QiPlusTwo,\Ci); \fill [green!75!black] [out=86, in=-176] (\QiPlusOne,\Pi) to (\QiPlusTwo,\PiPlusOne) -- (\QiPlusOne,\PiPlusOne); \end{scope}
	\begin{scope} \clip (\QiPlusOne,\Ci) rectangle (\QiPlusTwo,\PiPlusOne); \fill [yellow!75!black] [out=86, in=-176] (\QiPlusOne,\Pi) to (\QiPlusTwo,\PiPlusOne) -- (\QiPlusTwo,\Pi); \end{scope}
	\temp
	\draw [very thick] (\PiMinusOne,\BiMinusOne) -- (\PiMinusOne,\Bi) -- (\Pi,\Bi) -- (\Pi,\BiPlusOne) -- (\PiPlusOne,\BiPlusOne) -- (\PiPlusOne,\BiPlusOne)--+(0,0.25) node [above=1em,right] {\hspace*{-1em}\textbf{$\boldsymbol{\Omega_\A}$ for short cycles}};
	\draw [very thick] (\Qi,\CiMinusOne-0.1) -- (\Qi,\CiMinusOne) -- (\QiPlusOne,\CiMinusOne) -- (\QiPlusOne,\Ci) -- (\QiPlusTwo,\Ci) -- (\QiPlusTwo,\CiPlusOne) -- (\QiPlusThree,\CiPlusOne);
	
	\fill (\PiMinusOne,\Qi) circle (0.5pt) node [left]  {$(P_{i-1},Q_i)$};
	\fill (\PiMinusOne,\Bi) circle (0.5pt) node [left]  {$(P_{i-1},{B}_i)$};
	\fill (\Pi,\Bi) circle (0.5pt) node [right] {$(P_i,{B}_i)$};
	\fill (\Pi,\QiPlusOne) circle (0.5pt) node [below=0.5em,right] {$(P_i,Q_{i+1})$};
	
	\fill (\QiPlusOne,\Pi) circle (0.5pt) node [right] {$(Q_{i+1},P_i)$};
	\fill (\QiPlusTwo,\Ci) circle (0.5pt) node [right] {$(Q_{i+2},{C}_i)$};
	\fill (\QiPlusOne,\Ci) circle (0.5pt) node [left]  {$(Q_{i+1},{C}_i)$};
	\fill (\QiPlusTwo,\PiPlusOne) circle (0.5pt) node [right] {$(Q_{i+2},P_{i+1})$};
	\fill (\QiPlusTwo,\CiPlusOne) circle (0.5pt) node [left]  {$(Q_{i+2},{C}_{i+1})$};

\begin{scope}[yshift=-1.9cm]
	\begin{scope} \clip (\GiMinusOne,\Qi) rectangle (\Pi,\QiPlusOne); \fill [blue] [out=4, in=-94] (\PiMinusOne, \Qi) to (\Pi,\QiPlusOne) -- (\PiMinusOne,\QiPlusOne); \end{scope}
	\begin{scope} \clip (\PiMinusOne,\Qi) rectangle (\GiMinusOne,\QiPlusOne); \fill [red] [out=4, in=-94] (\PiMinusOne, \Qi) to (\Pi,\QiPlusOne) -- (\GiMinusOne,\Qi); \end{scope}
	\begin{scope} \clip (\HiPlusOne,\Pi) rectangle (\QiPlusTwo,\PiPlusOne); \fill [green!75!black] [out=86, in=-176] (\QiPlusOne,\Pi) to (\QiPlusTwo,\PiPlusOne) -- (\QiPlusOne,\PiPlusOne); \end{scope}
	\begin{scope} \clip (\QiPlusOne,\Pi) rectangle (\HiPlusOne,\PiPlusOne); \fill [yellow!75!black] [out=86, in=-176] (\QiPlusOne,\Pi) to (\QiPlusTwo,\PiPlusOne) -- (\QiPlusTwo,\Pi); \end{scope}
	\temp
	\draw [very thick] (\PiMinusOne,\BiMinusOne) -- (\PiMinusOne,\Qi) -- (\GiMinusOne,\Qi) -- (\GiMinusOne,\QiPlusOne) -- (\Gi,\QiPlusOne) -- (\Gi,\BiPlusOne)--+(0,0.25) node [above=1em,right] {\hspace*{-1em}\textbf{$\boldsymbol{\Omega_\A}$ for extremal}};
	\draw [very thick] (\Hi,\CiMinusOne-0.2) -- (\Hi,\Pi) -- (\HiPlusOne,\Pi) -- (\HiPlusOne,\PiPlusOne) -- (\HiPlusTwo,\PiPlusOne) -- (\HiPlusTwo,\PiPlusTwo-0.15);
	
	\fill (\PiMinusOne,\Qi) circle (0.5pt) node [left]  {$(P_{i-1},Q_i)$};
	\fill (\GiMinusOne,\Qi) circle (0.5pt) node [right] {$(G_{i-1},Q_i)$};
	\fill (\GiMinusOne,\QiPlusOne) circle (0.5pt) node [below=0.25em,left] {$(G_{i-1},Q_{i+1})$};
	\fill (\Pi,\QiPlusOne) circle (0.5pt) node [above] {$(P_i,Q_{i+1})$};
	\fill (\Gi,\QiPlusOne) circle (0.5pt) node [right] {$(G_i,Q_{i+1})$};
	
	\fill (\Hi,\Pi) circle (0.5pt) node [left]  {$(H_i,P_i)$};
	\fill (\QiPlusOne,\Pi) circle (0.5pt) node [below] {$(Q_{i+1},P_i)$};
	\fill (\HiPlusOne,\Pi) circle (0.5pt) node [right] {$(H_{i+1},P_i)$};
	\fill (\HiPlusOne,\PiPlusOne) circle (0.5pt) node [left] {$(H_{i+1},P_{i+1})$};
	\fill (\QiPlusTwo,\PiPlusOne) circle (0.5pt) node [below=0.8em,right] {\hspace*{-1em}$(Q_{i+2},P_{i+1})$};
	
\end{scope}
\end{tikzpicture}

\caption{Comparison of bulges and corners for short cycle and extremal parameters. Bulges $\Bc^i$ in blue and $\Bc_i$ in gold; corners $\Cc^i$ in red and $\Cc_i$ in green.}
\label{fig bulges and corners}
\end{figure}

\begin{prop} \label{thm conjugacy bijection}
	The map $\Phi$ with domain $\Omega_\geo$ given by
	\<\label{Phi} \Phi = \left\{ \begin{array}{ll}
        \Id & \text{on } \Omega_\geo \cap \Omega_\A \\
        U_{\tau(i)+1} & \text{on } \Bc_i \\
        U_{\tau(i)} & \text{on } \Bc^i
    \end{array} \right. \>
	is a bijection from $\Omega_\geo$ to $\Omega_\A$. Specifically, $\Phi(\Bc_i) = \Cc^{\tau(i)+1}$ and $\Phi(\Bc^i) = \Cc_{\tau(i)-1}$.
\end{prop}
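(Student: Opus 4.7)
The plan is to reduce the proposition to two local identities, one for each type of bulge. Because $\Phi$ is defined to be the identity on the overlap $\Omega_\geo \cap \Omega_\A$, the bijectivity of $\Phi\colon \Omega_\geo \to \Omega_\A$ follows once we verify (i) a clean set-theoretic decomposition of each domain into its overlap with the other together with a disjoint union of bulges, respectively corners, and (ii) the two image identities $\Phi(\Bc_i) = \Cc^{\tau(i)+1}$ and $\Phi(\Bc^i) = \Cc_{\tau(i)-1}$ for each $i$.

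First I would establish the decompositions. Away from the small rectangles $[Q_{i+1},Q_{i+2}] \times [P_i,P_{i+1}]$ and $[P_{i-1},P_i] \times [Q_i,Q_{i+1}]$, the curvilinear boundary of $\Omega_\geo$ and the step-shaped boundary of $\Omega_\A$ (described in \Cref{thm extremal bijectivity} via the points $G_j, H_j, D_j$) agree, so both $\Omega_\geo \setminus \Omega_\A$ and $\Omega_\A \setminus \Omega_\geo$ are localized inside these rectangles. The bulges $\Bc_i, \Bc^i$ and corners $\Cc_i, \Cc^i$ are, by definition, precisely the pieces of the symmetric difference inside each rectangle, so
\[ \Omega_\geo = (\Omega_\geo \cap \Omega_\A) \sqcup \bigsqcup_i \Bc_i \sqcup \bigsqcup_i \Bc^i, \qquad \Omega_\A = (\Omega_\geo \cap \Omega_\A) \sqcup \bigsqcup_i \Cc_i \sqcup \bigsqcup_i \Cc^i, \]
and it suffices to match each bulge bijectively to a corner via the prescribed $U$.

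The main computation is to verify that $U_{\tau(i)+1}$ sends the lower bulge $\Bc_i$ bijectively onto the upper corner $\Cc^{\tau(i)+1}$. Here I would use three ingredients: (a) $U_j$ acts diagonally on $\Sb \times \Sb$ by a M\"obius transformation, so it maps geodesic endpoints to geodesic endpoints, which means the curvilinear portion of $\partial \Bc_i$ (a piece of the curvilinear boundary of $\Omega_\geo$) is sent to another piece of that same curvilinear boundary; (b) the two forms $U_j = T_{\sigma(j)} T_{\tau(j)-1} = T_{\sigma(j-1)} T_{\tau(j)}$ together with the identities $G_{\sigma(i)} = T_i G_{i-2}$ or $G_{\sigma(i)} = T_{\tau(i)+1} G_{\tau(i)}$ from \Cref{thm system solution} and $T_i H_{i+1} = D_{\sigma(i)}$ from \Cref{lem DGH properties}(1); and (c) the vertex-cycle relations on the generators of $\G$. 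Tracking how $U_{\tau(i)+1}$ acts on the defining vertices of $\Bc_i$---in particular the curvilinear corners $(Q_{i+1},P_i)$ and $(Q_{i+2},P_{i+1})$ and the relevant $G$- and $H$-labeled corners of the step boundary---one checks that the image is bounded exactly by the curves and segments cutting out $\Cc^{\tau(i)+1}$. The argument for $U_{\tau(i)}(\Bc^i) = \Cc_{\tau(i)-1}$ is completely analogous, with the role of the lower/upper rectangles swapped.

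The main obstacle is the bookkeeping in this step: there are two sub-cases depending on whether $A_i = P_i$ or $A_i = Q_i$, and in each case one has to unwind the indices under $\sigma$ and $\tau$ to match vertex pre-images to vertex images correctly, so that the step pattern of the $\Omega_\A$ boundary near $\Bc_i$ is carried onto the step pattern near $\Cc^{\tau(i)+1}$. Once this is done, the observation that $\tau$ is an involution on $\{1,\ldots,8g-4\}$, so that both $i \mapsto \tau(i)+1$ and $i \mapsto \tau(i)-1$ are permutations of that set, guarantees that every corner is hit by a unique bulge; combined with $\Phi = \Id$ on the common core, the local bijections assemble into the desired global bijection $\Omega_\geo \to \Omega_\A$.
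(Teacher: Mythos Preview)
Your approach is essentially the one the paper uses: reduce to showing each bulge is carried bijectively to the corresponding corner by tracking the three boundary pieces (horizontal segment, vertical segment, curvilinear arc of $\partial\Omega_\geo$) under the M\"obius map $U_{\tau(i)+1}$, and then observe that the index shifts $i\mapsto\tau(i)\pm 1$ are permutations so the local bijections assemble globally.

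One simplification relative to what you anticipate: there is \emph{no} case split on whether $A_i=P_i$ or $A_i=Q_i$. The vertex computations $U_{\tau(i)+1}P_i=Q_{\tau(i)+1}$, $U_{\tau(i)+1}Q_{i+1}=P_{\tau(i)}$, etc., are independent of $\A$, and the only $\A$-dependent identity you need is $U_{\tau(i)+1}H_{i+1}=G_{\tau(i)}$, which follows directly from the definition $H_{i+1}=U_{i+1}G_{\tau(i)}$ together with $U_{\tau(i)+1}=U_{i+1}^{-1}$. You do not need to invoke the defining relations for $G_{\sigma(i)}$ from \Cref{thm system solution} or the identity $T_iH_{i+1}=D_{\sigma(i)}$ from \Cref{lem DGH properties}; the step boundary near $\Bc_i$ is determined by the single point $H_{i+1}$, and its image $G_{\tau(i)}$ is exactly the step point determining $\Cc^{\tau(i)+1}$, uniformly in $\A$. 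So the ``main obstacle'' you describe does not actually arise.
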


\begin{proof}
    All the sets $\Bc_i, \Bc^i, \Cc_i, \Cc^i$ are bounded by one horizontal line segment, one vertical line segment, and one curved segment that is part of $\partial \Omega_\geo$. Since each $U_j$ is a M\"obius transformation, we need only to show that these boundaries are mapped accordingly. The boundaries of $\Bc_i$ are the horizontal segment $[Q_{i+1},H_{i+1}] \times \{P_i\}$, part of the vertical segment $\{H_{i+1}\} \times [P_i,P_{i+1}]$, and part of the segment of $\partial \Omega_\geo$ connecting $(Q_{i+1},P_i)$ to $(Q_{i+2},P_{i+1})$. By definition, $\Phi(\Bc_i) = U_{\tau(i)+1}(\Bc_i)$. As calculated in the proof of \cite[Proposition~3.4]{AK19},
    \begin{align*}
        U_{\tau(i)+1} P_i &= Q_{\tau(i)+1}, &
        U_{\tau(i)+1} Q_{i+1} &= P_{\tau(i)}, \\
        U_{\tau(i)+1} P_{i+1} &= Q_{\tau(i)+2}, &
        U_{\tau(i)+1} Q_{i+2} &= P_{\tau(i)-1}.
    \end{align*}
    Additionally, since $H_{j} = U_{j}G_{\tau(j+1)}$ by definition and $U_{\tau(i)+1} = U_{i+1}^{-1}$, we have
    \begin{equation*}
        U_{\tau(i)+1} H_{i+1} = U_{\tau(i)+1} U_{i+1} G_{\tau(i)} = G_{\tau(i)}.
    \end{equation*}
    Therefore
    \begin{align*}
        U_{\tau(i)+1}\big( [Q_{i+1},H_{i+1}] \times \{P_i\} \big) &= [P_{\tau(i)}, G_{\tau(i)}] \times \{Q_{\tau(i)+1}\}, \\
        U_{\tau(i)+1}\big( \{H_{i+1}\} \times [P_i,P_{i+1}] \big) &= \{G_{\tau(i)}\} \times [Q_{\tau(i)+1}, Q_{\tau(i)+2}], \\
        U_{\tau(i)+1} (Q_{i+1},P_i) &= (P_{\tau(i)}, Q_{\tau(i)+1}), \\
        U_{\tau(i)+1} (Q_{i+2},P_{i+1}) &= (P_{\tau(i)-1}, Q_{\tau(i)+2}).
    \end{align*}
    The corner $\Cc^{\tau(i)+1}$ is exactly the set bounded by the horizontal segment $[P_{\tau(i)},G_{\tau(i)}] \times \{Q_{\tau(i)+1}\}$, part of the vertical segment $\{G_{\tau(i)}\} \times [Q_{\tau(i)+1}, Q_{\tau(i)+2}]$, and part of segment of $\partial \Omega_\geo$ connecting $(P_{\tau(i)}, Q_{\tau(i)+1})$ to $(P_{\tau(i)+1}, Q_{\tau(i)+2})$. Thus $U_{\tau(i)+1}(\Bc_i) = \Cc^{\tau(i)+1}$. A similar argument shows $U_{\tau(i)}(\Bc^i) = \Cc_{\tau(i)-1}$. Taking $\Phi(\Omega_\geo \cap \Omega_\A) = \Omega_\geo \cap \Omega_\A$ (since $\Phi = \Id$ on that set) together with $\Phi(\Bc_i) = \Cc^{\tau(i)+1}$ and $\Phi(\Bc^i) = \Cc_{\tau(i)-1}$, we have that $\Phi(\Omega_\geo) = \Omega_\A$.
\end{proof}

\begin{defn}
    Let $u,w\in \Sb=\partial\D$, $u\neq w$. An oriented geodesic in $\D$ from $u$ to $w$ is called \emph{$\A$-reduced} if $(u,w)\in\Omega_\A$.
\end{defn}

\Cref{thm conjugacy bijection} implies the following important facts:

\begin{samepage}
\begin{cor} \label{thm important corollaries}
	Let $\A$ be extremal, and let $\g = uw$ be a geodesic on $\D$.
	\begin{enumerate}
	\item If $\g$ intersects $\Fc$, then either $\g$ is $\A$-reduced or $U_j \g$ is $\A$-reduced, where $j=\tau(i)$ if $(u,w) \in \Bc^i$ and $j=\tau(i)+1$ if $(u,w) \in \Bc_i$.
	\item If $\g$ is $\A$-reduced, then either $\g$ intersects $\Fc$ or $\g$ intersects $U_j(\Fc)$, where $j=i$ if $(u,w) \in \Cc^i$ and $j=i+1$ if $(u,w) \in \Cc_i$.
	\end{enumerate}
\end{cor}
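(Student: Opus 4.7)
The plan is to derive this corollary directly from \Cref{thm conjugacy bijection}, together with the two basic equivalences
\[
	\g \text{ intersects } \Fc \iff (u,w) \in \Omega_\geo
	\quad\text{and}\quad
	\g \text{ is } \A\text{-reduced} \iff (u,w) \in \Omega_\A,
\]
the first of which was noted in the paragraph introducing $\Omega_\geo$ and the second being the definition of $\A$-reduced. The set-theoretic decompositions
\[
	\Omega_\geo = (\Omega_\geo \cap \Omega_\A) \,\cup\, \bigcup_{i}(\Bc_i \cup \Bc^i)
	\quad\text{and}\quad
	\Omega_\A = (\Omega_\geo \cap \Omega_\A) \,\cup\, \bigcup_{i}(\Cc_i \cup \Cc^i)
\]
are immediate from the definitions of the bulges and corners and reduce each part to two cases.

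For part (1), I would suppose $\g$ intersects $\Fc$, so $(u,w) \in \Omega_\geo$. If $(u,w) \in \Omega_\geo \cap \Omega_\A$ then $\g$ is already $\A$-reduced. Otherwise $(u,w)$ lies in some bulge, and I would simply invoke \Cref{thm conjugacy bijection}: on $\Bc_i$ the conjugacy $\Phi$ acts by $U_{\tau(i)+1}$ and on $\Bc^i$ by $U_{\tau(i)}$, and in either case the image is in $\Omega_\A$ by construction of $\Phi$. This yields exactly the values of $j$ stated in the corollary.

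For part (2), I would dualise: suppose $\g$ is $\A$-reduced, so $(u,w) \in \Omega_\A$. If $(u,w) \in \Omega_\geo$ then $\g$ intersects $\Fc$ and we are done. Otherwise $(u,w)$ lies in some corner, and the identities $\Phi(\Bc_i) = \Cc^{\tau(i)+1}$ and $\Phi(\Bc^i) = \Cc_{\tau(i)-1}$ from the Proposition let me invert: for $(u,w) \in \Cc^i$, applying $U_i^{-1}$ lands in the corresponding $\Bc_{\tau(i-1)} \subset \Omega_\geo$, so $U_i^{-1}\g$ intersects $\Fc$, equivalently $\g$ intersects $U_i(\Fc)$, giving $j=i$. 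The case $(u,w) \in \Cc_i$ is handled the same way via $U_{i+1}$ and $\Bc^{\tau(i+1)}$, yielding $j=i+1$.

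The argument is essentially bookkeeping; there is no serious obstacle. The only point that requires care is keeping the index translations straight between corners and bulges, and this is handled by observing that $\tau$ is an involution modulo $8g-4$, so the inverse correspondences needed in part (2) can be read off directly from the two equalities $\Phi(\Bc_i) = \Cc^{\tau(i)+1}$ and $\Phi(\Bc^i) = \Cc_{\tau(i)-1}$ with no further computation.
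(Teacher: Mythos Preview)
Your proposal is correct and is precisely the argument the paper has in mind: the paper gives no explicit proof, merely stating that \Cref{thm conjugacy bijection} ``implies the following important facts,'' and your write-up fills in exactly the bookkeeping one would expect, with the index translations in part~(2) handled correctly via the involutive nature of~$\tau$.
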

\end{samepage}

Using \Cref{thm important corollaries}, we can define the arithmetic cross-section.

\begin{defn} \label{defn CA}
	For $(u,w) \in \Omega_\A$, denote by $\varphi(u,w)$ the point $(z',\zeta')$ in $S\D$ where either $\g$ enters $\Fc$, or, if $\g$ does not intersect $\Fc$, the first entrance point of $\g$ to $U_j\Fc$, where $j$ is as in \Cref{thm important corollaries}(2). We define the \emph{arithmetic cross-section} $C_\A = \pi \circ \varphi(\Omega_\A)$, where $\pi$ is the canonical projection from $S\D$ to $SM$.
\end{defn}

\begin{remark} \label{alt defn CG}
	The geometric cross-section $C_\geo$ initially described in \Cref{sec intro} can be presented in an analogous way to \Cref{defn CA}. 
	For $(u,w) \in \Omega_\geo$, denote by $\psi(u,w)$ the point $(z,\zeta) \in S\D$ such that $\g$ enters $\Fc$ at $z$. Then $C_\geo$ is exactly $\pi \circ \psi(\Omega_\geo)$.
\end{remark}


\begin{prop} \label{thm conjugacy}
	The map $\Phi:\Omega_\geo \to \Omega_\A$ in \eqref{Phi} is a conjugacy between $F_\geo$ and $F_\A$, that is, $\Phi \circ F_\geo = F_\A \circ \Phi$.
\end{prop}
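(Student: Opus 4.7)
The plan is to verify the conjugacy pointwise on $\Omega_\geo$, using the decomposition
\[ \Omega_\geo = (\Omega_\geo \cap \Omega_\A) \;\cup\; \bigcup_i \Bc_i \;\cup\; \bigcup_i \Bc^i \]
implicit in \Cref{thm conjugacy bijection}. For each piece I would express both $\Phi(F_\geo(u,w))$ and $F_\A(\Phi(u,w))$ as explicit Möbius transformations of $(u,w)$ and check equality using the group relations in $\G$, namely the side-pairing rule $T_{\sigma(i)}T_i = \Id$, the four-term vertex cycle $T_{\rho^3(i)}T_{\rho^2(i)}T_{\rho(i)}T_i = \Id$, and the definition $U_k = T_{\sigma(k)}T_{\tau(k)-1}$.

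First, for $(u,w) \in \Omega_\geo \cap \Omega_\A$, the map $\Phi$ acts as the identity on $(u,w)$, so the task is to compare $F_\geo(u,w) = (T_j u, T_j w)$, where $(u,w) \in \Gc_j$, with $F_\A(u,w) = (T_k u, T_k w)$, where $w \in [A_k,A_{k+1})$. A direct comparison of the horizontal slices of $\Omega_\A$ produced in \Cref{thm extremal bijectivity} with the slices $\Gc_j$ of $\Omega_\geo$ shows that on $\Omega_\geo \cap \Omega_\A$ these two rules select the same generator $T_j = T_k$, so if the image also lies in $\Omega_\geo \cap \Omega_\A$ the conjugacy identity is immediate. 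If instead $F_\geo(u,w)$ falls in a bulge $\Bc_\ell$ or $\Bc^\ell$, then $\Phi$ applies $U_{\tau(\ell)+1}$ or $U_{\tau(\ell)}$ on the left-hand side, and one checks that this $U$-factor is absorbed by the corresponding shift in generator on the right-hand side using the vertex cycle relation.

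Next, suppose $(u,w) \in \Bc_i$. Then $\Phi(u,w) = U_{\tau(i)+1}(u,w) \in \Cc^{\tau(i)+1}$ by \Cref{thm conjugacy bijection}, and $F_\A$ applies some $T_k$ determined by the $w$-coordinate of $\Cc^{\tau(i)+1}$. On the geometric side, $F_\geo$ applies $T_{j'}$ where $j'$ is the side through which the geodesic actually exits $\Fc$, and the image lands either in $\Omega_\geo \cap \Omega_\A$ or in a new bulge, with $\Phi$ acting as identity or as an appropriate $U_m$ respectively. The required identity reduces to a relation of the form $T_k\,U_{\tau(i)+1} = U_m\,T_{j'}$ (with $U_m$ possibly trivial), which again follows from expanding each $U$ and applying the four-term vertex relation. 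The case $(u,w) \in \Bc^i$ is entirely symmetric, with $\tau(i)+1$ replaced by $\tau(i)$ and $\Cc^{\tau(i)+1}$ replaced by $\Cc_{\tau(i)-1}$.

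The main obstacle is not any structural obstruction but the combinatorial bookkeeping: for each bulge one must pin down exactly which side $j'$ the underlying geodesic exits through and which interval $[A_k,A_{k+1})$ contains the pushed image $U_{\tau(i)+1}w$ (or $U_{\tau(i)}w$), and both depend on whether each nearby $A_\ell$ equals $P_\ell$ or $Q_\ell$, so the verification naturally splits into subcases indexed by the four types of \Cref{def types}. Once the indices are matched correctly in each subcase, the resulting equalities between products of generators are forced by the relations in $\G$. This follows the blueprint of \cite[Proposition~3.5]{AK19}, transported from the short-cycle setting there to the extremal setting here, with the $U_j$ defined in \eqref{defn H D} playing the role of the corresponding ``corner-moving'' maps in \cite{AK19}.
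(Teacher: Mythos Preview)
Your approach is correct and matches what the paper does: the paper omits the proof entirely, stating only that it is ``extremely similar to the proofs of~\cite[Theorem~5.2]{AF91} and~\cite[Theorem~3.10]{AK19},'' which is precisely the case-by-case verification on $\Omega_\geo\cap\Omega_\A$, the $\Bc_i$, and the $\Bc^i$ that you outline. Your citation of~\cite[Proposition~3.5]{AK19} should be to Theorem~3.10 of that paper, but the content of your sketch is the right one.
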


\begin{cor} \label{thm equal cross-sections}
	For each extremal parameter choice $\A$, the arithmetic cross-section $C_\A$ and the geometric cross-section $C_\geo$ are equal. Moreover, the first return of a tangent vector in~$C_\geo$ to~$C_\geo$ is also its first return to~$C_\A$.
\end{cor}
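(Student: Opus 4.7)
The plan is to prove $C_\A = C_\geo$ directly as subsets of $SM$ by identifying the parametrizations $\pi\circ\varphi:\Omega_\A\to SM$ and $\pi\circ\psi:\Omega_\geo\to SM$ through the bijection $\Phi:\Omega_\geo\to\Omega_\A$ of \Cref{thm conjugacy bijection}. The pivotal observation is that each M\"obius map $U_j = T_{\sigma(j)}T_{\tau(j)-1}$ appearing piecewise in $\Phi$ lies in the surface group $\G$, so $\pi\circ U_j = \pi$ once we descend to $SM = \G\backslash S\D$. Once the identity $\pi\circ\varphi = \pi\circ\psi\circ\Phi^{-1}$ is established, the desired equality $C_\A = C_\geo$ drops out from bijectivity of $\Phi$, and the ``moreover'' claim becomes tautological.

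I would verify that identity piece by piece along the three pieces of \eqref{Phi}. On $\Omega_\geo\cap\Omega_\A$, $\Phi^{-1} = \Id$ and the geodesic $\g = uw$ meets $\Fc$, so both $\varphi(u,w)$ and $\psi(u,w)$ equal the entrance point of $\g$ into $\Fc$. On an upper corner $\Cc^i$, \Cref{thm conjugacy bijection} together with the involution $\tau\circ\tau = \Id$ gives $\Cc^i = \Phi(\Bc_{\tau(i)-1})$ with $\Phi|_{\Bc_{\tau(i)-1}} = U_i$, so $\Phi^{-1}|_{\Cc^i} = U_i^{-1}$. For $(u,w)\in\Cc^i$, the preimage $(u_0,w_0) := U_i^{-1}(u,w)$ lies in $\Bc_{\tau(i)-1}\subset\Omega_\geo$, so the geodesic $\g_0 = u_0 w_0$ enters $\Fc$ at $\psi(u_0,w_0)$; applying $U_i$ shows that $\g = U_i\g_0$ enters $U_i\Fc$ at $U_i\psi(u_0,w_0)$, which is exactly $\varphi(u,w)$ by \Cref{thm important corollaries}(2) with $j = i$. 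The lower-corner case $\Cc_i$ goes through identically with $U_{i+1}$ in place of $U_i$. Projecting to $SM$ and absorbing the prefactor via $U_j\in\G$, we obtain $\pi\circ\varphi = \pi\circ\psi\circ\Phi^{-1}$ on all of $\Omega_\A$, and hence
\[ C_\A = \pi\circ\varphi(\Omega_\A) = \pi\circ\psi(\Phi^{-1}(\Omega_\A)) = \pi\circ\psi(\Omega_\geo) = C_\geo. \]

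The ``moreover'' part is then immediate: with $C_\A$ and $C_\geo$ coinciding as subsets of $SM$, the first return of any $v\in C_\geo$ to $C_\geo$ is literally the same as its first return to $C_\A$. The only delicate step in the argument is the index bookkeeping in the corner cases, namely matching the $j$ supplied by \Cref{thm important corollaries}(2) with the $U_j$ appearing in $\Phi|_{\Bc_\bullet}$; once this is straightened out using the explicit formulas $\Phi(\Bc_i) = \Cc^{\tau(i)+1}$ and $\Phi(\Bc^i) = \Cc_{\tau(i)-1}$ together with $\tau\circ\tau = \Id$, the substantive mathematical content reduces to the two facts $\pi\circ U_j = \pi$ and $\Phi$ is a bijection.
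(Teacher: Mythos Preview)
Your argument is correct and matches the paper's approach in all essentials: both establish the identity $\pi\circ\varphi\circ\Phi = \pi\circ\psi$ (equivalently your $\pi\circ\varphi = \pi\circ\psi\circ\Phi^{-1}$) by splitting into the case $\Omega_\geo\cap\Omega_\A$ and the bulge/corner case, invoking \Cref{thm important corollaries} and the fact that each $U_j\in\G$. The paper works from the $\Omega_\geo$ side via $\Phi$ and part~(1) of \Cref{thm important corollaries}, while you work from the $\Omega_\A$ side via $\Phi^{-1}$ and part~(2); the index bookkeeping you spell out is exactly what is implicit in the paper's one-line treatment.

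The one genuine difference is the ``moreover'' clause. You correctly observe that once $C_\A=C_\geo$ as subsets of $SM$, the equality of first returns is a tautology. The paper instead invokes the conjugacy of \Cref{thm conjugacy} and assembles a larger commutative diagram identifying $(\pi\circ\psi)\circ F_\geo\circ(\pi\circ\psi)^{-1}$ with $(\pi\circ\varphi)\circ F_\A\circ(\pi\circ\varphi)^{-1}$. For the corollary as stated your shortcut suffices; the paper's diagram establishes the stronger (unstated) fact that these first return maps are realized by $F_\geo$ and $F_\A$ through the respective parametrizations, which is what one actually needs later for the symbolic coding.
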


The proof of \Cref{thm conjugacy} is extremely similar to the proofs of~\cite[Theorem~5.2]{AF91} and \cite[Theorem~3.10]{AK19}, so we omit it here.

\begin{proof}[{Proof of \Cref{thm equal cross-sections}}]
	We use the maps $\pi\circ\varphi:\Omega_\A \to C_\A$ and $\pi\circ\psi:\Omega_\geo \to \C_\geo$ from \Cref{defn CA} and \Cref{alt defn CG}, respectively.
	Since $\pi\circ\varphi$, $\pi\circ\psi$, and $\Phi$ are bijections and $\Phi$ acts by elements of $\G$, the diagram
    \begin{equation}
        \label{cd:equalcross} \commutativeDiagram{\Omega_\geo}{\pi\circ\psi}{C_\geo}{\Phi}{\Id}{\Omega_\A}{\pi\circ\varphi}{C_\A}
    \end{equation}
    commutes.
    Indeed, let $\g=uw$ with $(u,w) \in \Omega_\geo$. If $\g$ is $\A$-reduced, then $\Phi=\Id$, $\varphi=\psi$, and the diagram commutes trivially. If not, then the geodesic $U_j \g$ is $\A$-reduced for the index $1 \le j \le 8g-4$ determined in \Cref{thm important corollaries}(1); in this case $\Phi=U_j$ and thus $\varphi\circ\Phi=\psi$, so we have $\pi \circ \varphi \circ \Phi=\pi \circ \psi$ as well.

Combining the diagram (\ref{cd:equalcross}) with \Cref{thm conjugacy}, we obtain the diagram
    \[ \begin{tikzpicture}[node distance=2cm]
        \node (top1) {$C_\geo$};
        \node [right of=top1, node distance=2.5cm] (top2) {$\Omega_\geo$};
        \node [right of=top2] (top3) {$\Omega_\geo$};
        \node [right of=top3, node distance=2.5cm] (top4) {$C_\geo$};
        \node [below of=top1] (bot1) {$C_\A$};
        \node [below of=top2] (bot2) {$\Omega_\A$};
        \node [below of=top3] (bot3) {$\Omega_\A$};
        \node [below of=top4] (bot4) {$C_\A$};
    	\path[->,font=\scriptsize,>=angle 90]
            (top1) edge node [above] {$(\pi\circ\psi)^{-1}$} (top2)
            	(top2) edge node [above] {$F_\geo$} (top3) 
    			(top3) edge node [above] {$\pi\circ\psi$} (top4)
            (top1) edge node [left]  {$\Id$} (bot1)
            (top2) edge node [left]  {$\Phi$} (bot2)
            (top3) edge node [right]  {$\Phi$} (bot3)
            (top4) edge node [right]  {$\Id$} (bot4)
            (bot1) edge node [below] {$(\pi\circ\varphi)^{-1}$} (bot2) 
            	(bot2) edge node [below] {$F_\A$} (bot3) 
    			(bot3) edge node [below] {$\pi\circ\varphi$} (bot4);
    \end{tikzpicture} \]
    The composition of the maps in the upper and lower rows are the first return maps to $C_\geo$ and $C_\A$, respectively, so the result follows from commutativity of the diagram.
\end{proof}

\begin{remark}
	The set $\Omega_\A$ is also thought to be the global attractor of $F_\A$ as a map on $\Sb\times\Sb\setminus\Delta$, $\Delta = \setbuilder{(x,x)}{x\in\Sb}$, meaning that $\bigcap_{n=0}^{\infty} F_\A^{n}(\Sb\times\Sb\setminus\Delta) = \Omega_\A$, but a proof of this is not currently known. While this attractor property is a crucial part of Zagier's ``Reduction Theory'' (see~\cite{KU10,KU17}), it not a part of Adler and Flatto's work on $\G\backslash\D$.
\end{remark}

\subsection{Symbolic coding of geodesics} \label{sec coding}

Coding of geodesics for parameters $\A$ such that $\Omega_\A$ possesses a finite rectangular structure is described in~\cite[Section~5]{AK19}. The proofs in that section do not depend explicitly on the short cycle property (indeed, it is even stated that they apply to $\A = \bar P$) but rather on~\cite[Corollaries~3.6 and~3.8]{AK19}, which \Cref{thm important corollaries} here states for extremal parameters. Thus the results of~\cite[Section~5]{AK19} do apply to all extremal parameters.

\begin{defn}
Let $\gamma = uw$ be a geodesic on $\D$ with $(u,w) \in \Omega_\A$, and denote $(u_k,w_k) = F_\A^k(u,w)$ for all $k \in \Z$.
	The \emph{arithmetic code} of $\gamma$ is the two-sided sequence
		\< \label{coding sequence} [\gamma]_\A = (\ldots,n_{-2},n_{-1},n_0,n_1,n_2,\ldots) \>
	where $n_k = \sigma(i)$ for the index $i$ such that $w_k \in [A_i,A_{i+1})$.
\end{defn}

A sequence $\overline x \in \{1,2,\ldots,8g-4\}^\Z$ is called \emph{admissible} if it can be obtained by the coding procedure above for some geodesic $uw$ with $(u,w)\in\Omega_\A$.
Given an admissible sequence $\overline x$, we can associate to it a vector $\Cod(\overline x) \in SM$ as described in~\cite[Section~5]{AK19}. The map $\Cod$ is finite-to-one and is uniformly continuous on the set of admissible coding sequences~\cite[Proposition~5.3]{AK19}, and thus we can extend it to the closure $X_\A$ of the set of all admissible sequences.
In this way we associate to each $\A$ a symbolic system $(X_\A,\sigma)$, and the geodesic flow becomes a special flow over $(X_{\A},\sigma)$ with the ceiling function $g_{\A}(\overline x)$ on $X_{\A}$ being the time of the first return to the cross-section $C_\A$ of the geodesic associated to $\overline x$.

\begin{prop} \label{thm extremal Markov}
	For any extremal parameters $\A$, the collection of all sets $[P_i,Q_i]$ and $[Q_i,P_{i+1}]$ forms a Markov partition for $f_\A$.
\end{prop}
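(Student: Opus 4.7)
The plan is to verify two things: first, that $f_\A$ restricted to each element of the proposed partition coincides with a single Möbius transformation (so it is continuous and monotonic there); and second, that the image of each partition element under $f_\A$ is a union of partition elements. The first point is immediate from extremality: since every $A_j\in\{P_j,Q_j\}$ is already one of the endpoints $\{P_k,Q_k\}_k$ of the proposed partition, the proposed partition strictly refines the partition $\{[A_i,A_{i+1})\}$ on which $f_\A$ is defined piecewise as $T_i$, and so each element of the proposed partition lies inside a single continuity interval of $f_\A$.

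For the second point, I would use the identities
\[
    T_i P_i = Q_{\sigma(i)+1}, \quad T_i Q_i = Q_{\sigma(i)+2}, \quad T_i P_{i+1} = P_{\sigma(i)-1}, \quad T_i Q_{i+1} = P_{\sigma(i)},
\]
which are already extracted during the proof of \Cref{thm extremal bijectivity} and which encode the fact that $T_i$ sends the extensions of sides $i-1,i,i+1$ of $\Fc$ to the extensions of sides $\sigma(i)+1,\sigma(i),\sigma(i)-1$, respectively. A case split then yields three situations: for $I=[Q_i,P_{i+1}]$ one always has $f_\A|_I=T_i$ and $f_\A(I)=[Q_{\sigma(i)+2},P_{\sigma(i)-1}]$; for $I=[P_i,Q_i]$ with $A_i=P_i$ one has $f_\A|_I=T_i$ and $f_\A(I)=[Q_{\sigma(i)+1},Q_{\sigma(i)+2}]$; and for $I=[P_i,Q_i]$ with $A_i=Q_i$ one has $f_\A|_I=T_{i-1}$ and, after shifting the indices in the identities above, $f_\A(I)=[P_{\sigma(i-1)-1},P_{\sigma(i-1)}]$. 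In every case the two endpoints of $f_\A(I)$ belong to the partition-endpoint set $\{P_j,Q_j\}_j$, so since the proposed partition consists of exactly the maximal arcs between consecutive elements of $\{P_j,Q_j\}_j$ in the cyclic order $P_1,Q_1,P_2,Q_2,\ldots$, the image $f_\A(I)$ is automatically the union of consecutive partition elements spanning the arc between those endpoints.

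There is no serious obstacle here; the argument reduces to bookkeeping once the four image identities for $T_i$ are in hand. The only subtlety worth flagging is orientation: $T_i$ is an orientation-preserving homeomorphism of $\Sb$, so even though $I=[Q_i,P_{i+1}]$ is the short arc between its endpoints, its image $[Q_{\sigma(i)+2},P_{\sigma(i)-1}]$ is the long arc going counterclockwise from $Q_{\sigma(i)+2}$ to $P_{\sigma(i)-1}$, sweeping across almost all of $\Sb$. This is exactly the expansiveness expected of a Bowen–Series type boundary map and is consistent with, rather than an obstruction to, the Markov property.
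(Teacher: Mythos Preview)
Your proof is correct and essentially identical to the paper's: both observe that extremality forces each arc $[P_i,Q_i]$ and $[Q_i,P_{i+1}]$ to lie in a single branch of $f_\A$, then compute its image via the endpoint identities for $T_i$ (you extract them from the proof of \Cref{thm extremal bijectivity}; the paper cites \cite[Proposition~2.2]{KU17}) and note the resulting endpoints again belong to $\{P_j,Q_j\}_j$. Your image $[Q_{\sigma(i)+2},P_{\sigma(i)-1}]$ for $T_i[Q_i,P_{i+1}]$ is the one consistent with the computations in the proof of \Cref{thm extremal bijectivity}.
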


\begin{proof}
The boundaries of Markov partition elements are measure zero, so we look at $16g-8$ total open intervals, which we denote here by $I_{2i-1} = (P_i,Q_i)$ and $I_{2i} = (Q_i,P_{i+1})$. 
Using~\cite[Proposition~2.2]{KU17}, we have
\[ \begin{split}
	f_\A(I_{2i})
	&= T_i(Q_i, P_{i+1})
	= (P_{\sigma(i)+1}, P_{\sigma(i)-2})
	\\&= I_{2\sigma(i)+1} \cup I_{2\sigma(i)+1} \cup I_{2\sigma(i)+3} \cup \cdots \cup I_{2\sigma(i)-3} \cup I_{2\sigma(i)-2}.
\end{split} \]
If $A_i = P_i$, then 
\begin{align*}
	f_\A(I_{2i-1})
	= T_i(P_i, Q_i)
	= (Q_{\sigma(i)+1}, Q_{\sigma(i)+2})
	= I_{2\sigma(i)+2} \cup I_{2\sigma(i)+3},
\end{align*}
and if $A_i = Q_i$ then 
\begin{align*}
	f_\A(I_{2i-1})
	= T_{i-1}(P_i, Q_i)
	= (P_{\tau\sigma(i)}, P_{\tau\sigma(i)+1})
	= I_{2\tau\sigma(i)-1} \cup I_{2\tau\sigma(i)}.
\end{align*}	
Thus, for every $1 \le k \le 16g-8$, we have that $f_\A(I_k)$ is a union of some sets~$I_j$.
\end{proof}

As described in~\cite[Appendix~C]{AF91}, sofic systems are obtained from Markov ones by amalgamation of the alphabet, and, conversely, Markov is obtained from sofic by refinement of the alphabet. Thus, combining each pair $I_{2k-1}$ and $I_{2k}$ into a single interval $[P_k,P_{k+1})$ gives the following:

\begin{cor} \label{thm extremal sofic}
	For any extremal parameters~$\A$, the shift on $X_\A$ is sofic with respect to the alphabet $\{1,2,\ldots,8g-4\}$.
\end{cor}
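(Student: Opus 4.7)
The plan is to combine \Cref{thm extremal Markov} with the classical fact that a continuous factor of a topological Markov chain is sofic (see \cite[Appendix~C]{AF91}). First, I would form the topological Markov chain $(\hat X_\A, \hat\sigma)$ on the alphabet $\{1,\ldots,16g-8\}$ whose admissible transitions $k \to j$ are exactly those with $I_j \subseteq f_\A(I_k)$; by \Cref{thm extremal Markov} this is well defined and encodes the dynamics of $f_\A$ with respect to the Markov partition $\{I_k\}$.

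Next, I would construct a $1$-block factor map $\hat X_\A \to X_\A$. Because $\A$ is extremal, every $A_i$ equals either $P_i$ or $Q_i$, so each arithmetic coding interval $[A_i,A_{i+1})$ is a union of consecutive Markov atoms $I_k$; in particular no atom $I_k$ straddles two distinct arithmetic intervals. Hence every $I_k$ lies inside a unique arithmetic interval $[A_{i(k)},A_{i(k)+1})$, and the assignment $k \mapsto \sigma(i(k))$ gives a well-defined symbol map from $\{1,\ldots,16g-8\}$ to $\{1,\ldots,8g-4\}$. Applied coordinate-by-coordinate, it produces a continuous, shift-equivariant surjection $\hat X_\A \to X_\A$, exhibiting $X_\A$ as a factor of a topological Markov chain.

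The conclusion then follows from the standard theorem that a factor of a topological Markov chain is sofic. Equivalently, as the excerpt's remark suggests, one may first perform the explicit amalgamation $I_{2k-1}\cup I_{2k} \mapsto [P_k,P_{k+1})$ to reduce the alphabet to size $8g-4$ and then note that the arithmetic coding is already compatible with this coarsening via the bijection $i \mapsto \sigma(i)$. I do not expect any real obstacle: the only thing to check carefully is the compatibility between the arithmetic partition $\{[A_i,A_{i+1})\}$ and the Markov partition $\{I_k\}$, which is immediate from $A_i \in \{P_i,Q_i\}$, and the rest is formal.
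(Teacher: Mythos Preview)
Your proposal is correct and mirrors the paper's argument exactly: invoke \Cref{thm extremal Markov} and then pass to a factor/amalgamation via \cite[Appendix~C]{AF91}. One small caveat: your side remark that the specific pairing $I_{2k-1}\cup I_{2k}\mapsto[P_k,P_{k+1})$ is ``equivalent'' to the arithmetic partition via $i\mapsto\sigma(i)$ is not quite right for general extremal $\A$ (the partitions $\{[P_k,P_{k+1})\}$ and $\{[A_i,A_{i+1})\}$ differ unless $\A=\bar P$), but your main argument---amalgamating directly to $\{[A_i,A_{i+1})\}$---is the correct and complete one, and is in fact cleaner than the paper's own shorthand.
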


\section{Dual parameters} \label{sec dual}

The ``future'' of an arithmetic code $[uw]_\A$, that is, the terms $n_k$ with $k \ge 0$, can be determined from $w$ alone, but the ``past'' ($k<0$) generally depends on both~$u$ and~$w$. For $(a,b)$-continued fractions, the existence of ``dual codes'' (see~\cite[Section~5]{KU12}) for certain parameters allows the digits of the past to be determined from a single endpoint in those cases, and indeed a similar phenomenon can occur in the cocompact Fuchsian setting.

\begin{defn}
    Let $\A=\{A_i\}$ and $\Dual=\{D_i\}$ be two parameter choices for boundary maps from the same surface $M = \Gamma\backslash\D$ such that $F_\A$ and $F_{\Dual}$ have domains $\Omega_\A$ and $\Omega_{\Dual}$, respectively, with finite rectangular structure, and let $\phi(x, y) = (y, x)$.
    We say that $\Dual$ is \emph{dual} to $\A$ if $\phi(\Omega_\A) = \Omega_\Dual$ and $\phi(F_\A^{-1}(p)) = F_\Dual(\phi(p))$ for all $p = (u,w) \in \Omega_\A$ with $u \notin \Dual$.\footnote{\,The definition of duality in~\cite[Definition~9.1]{AK19} states $\phi(F_\A^{-1}(p)) = F_{\A'}(\phi(p))$ for all $p \in \Omega_\A$, but no examples of duality are proved in that paper. Here, the definition's requirement is for $(u,w) \in \Omega_\A$ with $u \notin \A'$.}
\end{defn}

\begin{samepage}
\begin{thm}[{\cite[Theorem~9.2]{AK19}}]
	If $\Dual$ is dual to $\A$ and $(u,w) \in \Omega_\A$, then the arithmetic code $[\gamma]_\A$ of the geodesic $\gamma = uw$ satisfies 
	\begin{itemize}
		\item for $k \ge 0$, $n_k$ is the value of $i$ for which $f_\A^k(w) \in [A_{\sigma(i)},A_{\sigma(i)+1})$, and
		\item for $k < 0$, $n_k$ is the value of $i$ for which $f_{\Dual}^{-k+1}(u) \in [D_i,D_{i+1})$.
	\end{itemize}
\end{thm}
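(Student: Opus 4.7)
My plan is to handle the two index ranges separately: the non-negative half is essentially the definition of the arithmetic code, while the negative half is where the duality hypothesis does all the work. The unifying observation is that $F_\A(u,w) = (T_j u, T_j w)$ with the branch selected by the second coordinate via $w \in [A_j, A_{j+1})$, so iterating shows that the second coordinate of $F_\A^k(u,w)$ equals $f_\A^k(w)$; the identical statement holds for $F_\Dual$ with $\A$ replaced by $\Dual$.

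For $k \ge 0$: by the definition of the arithmetic code, $n_k = \sigma(j)$ for the index $j$ with $w_k \in [A_j, A_{j+1})$. Using $\sigma^2 = \Id$ this rewrites as ``$n_k = i$ iff $w_k \in [A_{\sigma(i)}, A_{\sigma(i)+1})$''. Combining with $w_k = f_\A^k(w)$ gives the first bullet immediately, with no use of the dual.

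For $k < 0$, write $k = -m$ with $m \ge 1$ and iterate the duality. The hypothesis rearranges to $F_\A^{-1} = \phi \circ F_\Dual \circ \phi$, and since $\phi$ is an involution, induction gives $F_\A^{-m} = \phi \circ F_\Dual^m \circ \phi$ on the subset of $\Omega_\A$ where the inverse iterates stay away from the countable bad set tied to the hypothesis $u \notin \Dual$. Hence $F_\A^{-m}(u,w) = \phi(F_\Dual^m(w,u))$. Applying the ``second-coordinate'' observation now to $F_\Dual$, if $F_\Dual^m(w,u) = (a_m, b_m)$ then $b_m = f_\Dual^m(u)$, and therefore $(u_{-m}, w_{-m}) = (b_m, a_m)$. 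To read off the branch of $F_\A^{-1}$ used at each step, I would rewrite the duality as $F_\A^{-1}(u', w') = \phi(F_\Dual(w', u')) = (T_k u', T_k w')$ with $u' \in [D_k, D_{k+1})$: this exhibits the inverse map as piecewise M\"obius with branches indexed by the dual partition, now selected by the \emph{first} coordinate. The forward step $(u_{-m}, w_{-m}) \mapsto (u_{-m+1}, w_{-m+1})$ uses $T_j$ with $w_{-m} \in [A_j, A_{j+1})$, so consistency $T_k = T_j^{-1} = T_{\sigma(j)}$ forces $k = \sigma(j)$, giving $n_{-m} = k$. The branch index $k$ is thus determined by $u_{-m+1}$, the second coordinate of $F_\Dual^{m-1}(w,u)$, which equals $f_\Dual^{m-1}(u)$. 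This produces the claimed characterization of $n_{-m}$ as the unique $i$ with an explicit $f_\Dual$-iterate of $u$ lying in $[D_i, D_{i+1})$.

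The main obstacle is purely bookkeeping: keeping the indices, the involution $\sigma$, and the exponent shift in the dual iterate straight across the $\phi$-swap. No further geometric input is required beyond the duality identity once it is iterated and the inverse map is expressed in piecewise form; everything then reduces to the symmetric roles played by $u$ and $w$ under $\phi$.
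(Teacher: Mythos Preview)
This theorem is not proved in the present paper; it is quoted from \cite[Theorem~9.2]{AK19} and stated without argument, so there is no proof here to compare against. Your approach---reading the non-negative half directly from the definition of the arithmetic code, and obtaining the negative half by iterating the conjugacy $F_\A^{-1} = \phi \circ F_\Dual \circ \phi$ to express $F_\A^{-m}(u,w)$ as $\phi\big(F_\Dual^{m}(w,u)\big)$---is the natural one and is essentially forced by the definition of duality.

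One point is worth flagging. Your computation gives $n_{-m}$ as the index $i$ for which $u_{-m+1} = f_\Dual^{\,m-1}(u) \in [D_i, D_{i+1})$; in terms of $k = -m$ the exponent is $-k-1$. The theorem as printed in this paper has exponent $-k+1$, which differs by $2$. Your derivation is the correct one, and it is corroborated by the calculation inside the proof of \Cref{thm dual}: there it is shown that if $F_\A$ sends $(x_0,y_0)$ to $(x_1,y_1)$ via $T_i$ (so $y_0 \in [A_i,A_{i+1})$), then $x_1 \in [D_{\sigma(i)}, D_{\sigma(i)+1})$. Applied with $(x_0,y_0) = (u_{-1},w_{-1})$ this says $n_{-1} = \sigma(i)$ is read off from $u_0 = u = f_\Dual^{\,0}(u)$, not from $f_\Dual^{\,2}(u)$. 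The exponent $-k+1$ in the statement appears to be a misprint for $-k-1$; do not try to bend your argument to match it.
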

\end{samepage}

Dual codes do not exist within the class of parameters with short cycles~\cite[Proposition~9.3]{AK19}. Prior to 2018, the only known examples of dual parameters were~$\bar P$ and~$\bar Q$, both of which are extremal, and in fact a search for additional examples of duality was a primary motivation for studying the class of extremal parameters. \Cref{thm dual} shows that every extremal $\A$ has a corresponding dual parameter set.

\begin{thm} \label{thm dual}
	Given any extremal parameters $\A$, the parameter choice $\Dual = \{D_1,$ \ldots, $D_{8g-4}\}$ from \Cref{thm system solution} and Equation~\ref{defn H D} is dual to $\A$.
\end{thm}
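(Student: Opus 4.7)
The plan is to define $\Omega_\Dual := \phi(\Omega_\A)$ and reduce the verification of duality to a single key containment: for each index $i$ and every $(u, w) \in \Omega_\A$ with $w \in [A_i, A_{i+1})$, one has $T_i u \in [D_{\sigma(i)}, D_{\sigma(i)+1})$. Granting this, the duality identity is immediate: writing $p = (u_0, w_0) \in \Omega_\A$ and $F_\A^{-1}(p) = (u, w)$ with $w \in [A_i, A_{i+1})$, we obtain $u_0 = T_i u \in [D_{\sigma(i)}, D_{\sigma(i)+1})$, so
\[ F_\Dual(\phi(p)) = F_\Dual(w_0, u_0) = (T_{\sigma(i)} w_0, T_{\sigma(i)} u_0) = (w, u) = \phi(F_\A^{-1}(p)), \]
using $T_{\sigma(i)} T_i = \Id$. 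Bijectivity of $F_\Dual$ on $\phi(\Omega_\A)$ then follows because $\phi \circ F_\A^{-1} \circ \phi^{-1}$ is a bijection on $\phi(\Omega_\A)$; and $\phi(\Omega_\A)$ inherits a finite rectangular structure from $\Omega_\A$ by coordinate swap.

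To establish the key containment, my approach is to decompose the horizontal strip $\{w \in [A_i, A_{i+1})\} \cap \Omega_\A$ into its constituent rectangles: $R_i'$ always lies in the strip, $R_i''$ lies in the strip if and only if $A_i = P_i$, and $R_{i+1}''$ lies in the strip if and only if $A_{i+1} = Q_{i+1}$. Their images under $T_i$ are already computed in the proof of \Cref{thm extremal bijectivity}:
\begin{align*}
    T_i R_i' &= [D_{\sigma(i)}, D_{\sigma(i)+1}] \times [Q_{\sigma(i)+2}, P_{\sigma(i)-1}], \\
    T_i R_i'' &= [D_{\sigma(i)}, G_{\sigma(i)}] \times [Q_{\sigma(i)+1}, Q_{\sigma(i)+2}] \text{ when } A_i = P_i, \\
    T_i R_{i+1}'' &= [H_{\sigma(i)-1}, D_{\sigma(i)+1}] \times [P_{\sigma(i)-1}, P_{\sigma(i)}] \text{ when } A_{i+1} = Q_{i+1},
\end{align*}
where the last line uses the identity $\tau\sigma(i+1) = \sigma(i) - 1$, easily verified by direct case analysis of $\sigma$ on even and odd indices. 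The containment in $[D_{\sigma(i)}, D_{\sigma(i)+1}) \times \Sb$ then reduces to the inequalities $G_{\sigma(i)} \le D_{\sigma(i)+1}$ and $H_{\sigma(i)-1} \ge D_{\sigma(i)}$; the former is immediate from \Cref{lem DGH properties}, since $G_j \in [P_j, P_{j+1}]$ and $D_{j+1} \in [P_{j+1}, Q_{j+1}]$ give $G_{\sigma(i)} \le P_{\sigma(i)+1} \le D_{\sigma(i)+1}$.

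The main obstacle will be the second inequality $H_{\sigma(i)-1} \ge D_{\sigma(i)}$ under the hypothesis $A_{i+1} = Q_{i+1}$: it does not follow from the coarse range bounds of \Cref{lem DGH properties} alone, and the hypothesis $A_{i+1} = Q_{i+1}$ must enter in an essential way. I would attempt to prove it by substituting the $A_{i+1} = Q_{i+1}$ branch of the system~\eqref{system} (namely $G_{\sigma(i+1)} = T_{\tau(i+1)+1} G_{\tau(i+1)}$) into the explicit expression for $H_{\sigma(i)-1}$ obtained by unwinding the definitions of $H$ and $U$, then simplifying using the Fuchsian group relations $T_{\sigma(k)} T_k = \Id$ and $T_{\rho^3(k)} T_{\rho^2(k)} T_{\rho(k)} T_k = \Id$. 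I expect this in fact to yield the equality $H_{\sigma(i)-1} = D_{\sigma(i)}$, possibly via a case analysis using the Type classification of \Cref{def types} in the same spirit as the proof of \Cref{thm system solution}. Once this identity is secured, the key containment, hence the duality identity and the theorem, follow.
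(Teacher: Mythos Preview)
Your overall strategy is the same as the paper's: reduce to the key containment $T_i u \in [D_{\sigma(i)}, D_{\sigma(i)+1})$ and verify it by splitting according to the rectangles $R_i'$, $R_i''$, $R_{i+1}''$. The paper carries out exactly this case analysis (phrased as $y_0 \in [P_i,Q_i)$, $[Q_i,P_{i+1})$, $[P_{i+1},Q_{i+1})$), and the first two cases are handled just as you describe.

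Where you diverge is in the third case, and there your proposed resolution would fail. The equality $H_{\sigma(i)-1} = D_{\sigma(i)}$ that you anticipate is false: since $D_{\sigma(i)} = T_i H_{i+1}$ by \Cref{lem DGH properties}(1) and $H_{\sigma(i)-1} = T_i H_{i+2}$ by the computation you cite from \Cref{thm extremal bijectivity}, the equality would force $H_{i+1} = H_{i+2}$, which generically does not hold. So the substitution-and-simplification program you outline cannot terminate in the identity you expect. What actually works is much simpler and needs no case analysis on types: the paper bounds $T_i H_{i+2}$ directly, using $H_{i+2} \in [Q_{i+2},Q_{i+3}]$ from \Cref{lem DGH properties}(3) together with $T_i Q_{i+2} = Q_{\sigma(i)}$, to conclude $T_i H_{i+2} \in [Q_{\sigma(i)}, D_{\sigma(i)+1}] \subset [D_{\sigma(i)}, D_{\sigma(i)+1}]$. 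Equivalently, in your formulation the needed inclusion is just $[H_{i+2},G_{i-1}] \subset [H_{i+1},G_{i-1}]$, which follows immediately from the range estimates of \Cref{lem DGH properties}; the hypothesis $A_{i+1} = Q_{i+1}$ is used only to ensure $R_{i+1}''$ lies in the strip, not in the bounding step. Replace your proposed ``main obstacle'' argument with this one-line bound and the proof goes through.
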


Note that $\Dual$ is not necessarily extremal, nor will it satisfy the short cycle property. Since $\Dual$ may not satisfy the conditions of \Cref{thm extremal bijectivity} or~\cite[Theorem~1.3]{KU17}, the domain for $F_\Dual$ must be described and proven independently.

\begin{thm} \label{thm dual bijectivity}
	Given any extremal parameters $\A$, let $\{G_i\}$, $\{H_i\}$, and $\{D_i\}$ be given by \Cref{thm system solution} and Equation~\ref{defn H D}. Then then set
	\< \label{Omega dual} \Omega_\Dual := \bigcup_{i=1}^{8g-4} \raisebox{-0.5em}{$\begin{array}{ll} [Q_{i+2},P_{i-1}] \!\times\! [D_i,D_{i+1}] \\ \quad\cup\; [P_{i-1},P_i] \!\times\! [H_i,D_{i+1}] \;\cup\; [Q_{i+1},Q_{i+2}] \!\times\! [D_i,G_i] \end{array}$} \>
	is a bijectivity domain for the map $F_\Dual$.
\end{thm}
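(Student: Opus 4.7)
The plan is to follow the blueprint of the proof of \Cref{thm extremal bijectivity}: decompose $\Omega_\Dual$ into horizontal slabs on each of which $F_\Dual$ acts by a single M\"obius transformation, compute the image of each constituent rectangle, and verify that the union of images reassembles to $\Omega_\Dual$. Since $F_\Dual(u,w)=(T_iu,T_iw)$ whenever $w\in[D_i,D_{i+1})$, the natural slabs are $\Sb\times[D_i,D_{i+1}]$. Label the three rectangle families in~\eqref{Omega dual} as $S_i^0:=[Q_{i+2},P_{i-1}]\times[D_i,D_{i+1}]$, $S_i^+:=[P_{i-1},P_i]\times[H_i,D_{i+1}]$, and $S_i^-:=[Q_{i+1},Q_{i+2}]\times[D_i,G_i]$.

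First I would check, using \Cref{lem DGH properties} and the companion identity $D_{\sigma(i)}=T_iH_{i+1}$ (obtained by applying part~(1) of that lemma at $\sigma(i)$), that the auxiliary rectangles $S_i^\pm$ actually lie in the slab $\Sb\times[D_i,D_{i+1}]$, i.e.\ that $D_i\le H_i$, $G_i\le D_{i+1}$, and so on. These inequalities rely on the two branches of the $G$-equation in~\eqref{system} and on the inclusions $D_i\in[P_i,Q_i]$, $H_i\in[Q_i,Q_{i+1}]$, $G_i\in[P_i,P_{i+1}]$ from \Cref{lem DGH properties}.

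Next I would compute $T_i(S_i^0)$, $T_i(S_i^+)$, and $T_i(S_i^-)$ using the side-pairing values of $T_iP_k$ and $T_iQ_k$ (as in \cite[Proposition~2.2]{KU17}), together with $T_iH_{i+1}=D_{\sigma(i)}$ and the relations transforming $G_{i-2}$ or $G_{\tau(i)}$ supplied by \Cref{thm system solution}, choosing the relevant branch according to whether $A_i=P_i$ or $A_i=Q_i$. Each image is itself a rectangle whose corners are among the points $\{P_k,Q_k,G_k,H_k,D_k\}$.

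The final step, and the main obstacle, is the combinatorial reassembly. Over consecutive $i$ the images $T_i(S_i^0)$ should telescope through the chain $\bigcup_i[D_{\sigma(i)},D_{\sigma(i)+1}]$ into the wide rectangles $[Q_{j+2},P_{j-1}]\times[D_j,D_{j+1}]$ of $\Omega_\Dual$ indexed by $j=\sigma(i)$, while the auxiliary images $T_i(S_i^\pm)$ fill in exactly the corner pieces $[P_{j-1},P_j]\times[H_j,D_{j+1}]$ and $[Q_{j+1},Q_{j+2}]\times[D_j,G_j]$. A case split on $A_i\in\{P_i,Q_i\}$---equivalently, on the type of $i$ in \Cref{def types}---determines which auxiliary piece arises where, and one must verify that every rectangle of $\Omega_\Dual$ is produced exactly once, with no extras and no overlaps beyond measure zero. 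A potential shortcut is to show directly that $\phi(\Omega_\A)=\Omega_\Dual$ for the flip $\phi(u,w)=(w,u)$ and then invoke \Cref{thm extremal bijectivity} through the duality identity $F_\Dual\circ\phi=\phi\circ F_\A^{-1}$; however, the paragraph preceding the statement emphasises that $\Dual$ need not be extremal and that the bijectivity domain must be established \emph{independently}, so I would present the direct computation above.
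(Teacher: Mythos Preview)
Your outline follows the same skeleton as the paper (horizontal slabs $\to$ apply $T_i$ $\to$ reassemble), but your reassembly step is aimed at the wrong decomposition, and that is where the work would balloon. Computing $T_iS_i^0$ gives
\[
T_i\big([Q_{i+2},P_{i-1}]\times[D_i,D_{i+1}]\big)=[Q_j,P_{j+1}]\times[H_{j+1},G_{j-1}],\qquad j=\sigma(i),
\]
a \emph{tall, narrow} vertical strip, not a piece that telescopes horizontally across $\bigcup_i[D_{\sigma(i)},D_{\sigma(i)+1}]$ into a wide slab $[Q_{j+2},P_{j-1}]\times[D_j,D_{j+1}]$. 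So the reassembly you describe does not match what actually comes out of the computation. You could still force the reassembly into horizontal slabs by intersecting each image with $\Sb\times[D_m,D_{m+1}]$ as in the proof of \Cref{thm extremal bijectivity}, but that is the long way around.

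The paper's simplification is to introduce a \emph{second} decomposition of $\Omega_\Dual$ into vertical strips
\[
V_i=[P_i,Q_i]\times[H_{i+1},G_{i-2}),\qquad V_i'=[Q_i,P_{i+1}]\times[H_{i+1},G_{i-1}),
\]
which are exactly $\phi(R_i'')$ and $\phi(R_i')$ from~\eqref{Omega E horizontal strips}. With this in hand, each horizontal piece maps to a \emph{single} vertical strip: $T_iS_i^0=V_{\sigma(i)}'$, $T_iS_i^-=V_{\sigma(i)}$, and $T_iS_i^+=V_{\sigma(i)+1}$, with no telescoping at all. The case split on $A_{\sigma(i)}$ and $A_{\sigma(i)+1}$ is still needed, but for a different reason than you anticipate: it governs when $S_i^-$ or $S_i^+$ is degenerate (one has $G_i=D_i$ when $A_{\sigma(i)}=Q_{\sigma(i)}$ and $H_i=D_{i+1}$ when $A_{\sigma(i)+1}=P_{\sigma(i)+1}$), and this degeneracy is exactly what makes the union $\bigcup_k(V_k\cup V_k')$ come out with each $V_k$ hit once. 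Your proposal does not mention these empty-interior cases, and without them the bookkeeping of ``every rectangle produced exactly once'' does not close. Finally, your instinct to avoid the shortcut via $F_\Dual\circ\phi=\phi\circ F_\A^{-1}$ is correct: that identity is established only afterwards, in the proof of \Cref{thm dual}, using the present theorem.
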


\begin{proof}
	Define the following rectangles:
	\begin{align*}
    	R_i &:= [Q_{i+2},P_{i-1}] \times [D_i,D_{i+1}) \\*
		R_i^u &:= [P_{i-1},P_i] \times [H_i,D_{i+1}) \\*
		R_i^\ell &:= [Q_{i+1},Q_{i+2}] \times [D_i,G_i].
	\end{align*}
	These rectangles may have empty interior (see Cases~\ref{case-upper-empty} and~\ref{case-lower-empty} below), but we include them in calculations for now.
	
	Although initially $\Omega_\Dual$ was defined as $\bigcup_{i=1}^{8g-4} R_i \cup R_i^u \cup R_i^\ell$, it can equivalently be described via a decomposition into vertical strips as $\Omega_\Dual = \bigcup_{i=1}^{8g-4} V_i \cup V_i'$, where
	\< \begin{split} V_i &:= [P_i,Q_i] \times [H_{i+1},G_{i-2}) \\ V_i' &:= [Q_i,P_{i+1}] \times [H_{i+1},G_{i-1}). \end{split} \label{Omega dual vertical strips} \>
	
	The map $F_\Dual$ acts on the rectangles $R_i, R_i^u, R_i^\ell$ by $T_i$.
	From \Cref{lem DGH properties}(1), we have $T_i D_i = H_{\sigma(i)+1}$ for all $i$, and from the definition of $D_{i+1}$ we have $T_i D_{i+1} = G_{\sigma(i)-1}$ for all $i$. Setting $j = \sigma(i)$ for the remainder of this proof, we have
	\begin{align*}
    	F_\Dual R_i 
		&= [T_iQ_{i+2},T_iP_{i-1}] \times [T_iD_i,T_iD_{i+1}) \\*
		&= [Q_{j}, P_{j+1}] \times [H_{j+1}, G_{j-1}) \\*
		&= V_j'.
	\end{align*}
	
	We now consider the following four cases:
	\begin{enumerate}[\quad\text{Case }I:]
		\item \label{case-upper} $A_{j} = P_{j}$.
		\item \label{case-upper-empty} $A_{j} = Q_{j}$.
		\item \label{case-lower} $A_{j+1} = Q_{j+1}$.
		\item \label{case-lower-empty} $A_{j+1} = P_{j+1}$.
	\end{enumerate}
	Note that Cases~\ref{case-upper} and~\ref{case-upper-empty} are mutually exclusive, as are Cases~\ref{case-lower} and~\ref{case-lower-empty}.
	
	\medskip
	\textbf{Case~\ref{case-upper}.}
	When $A_{j} = P_{j}$,~\eqref{system} gives \[ G_i = G_{\sigma(j)} = T_{j} G_{j-2}, \] and applying $T_i = T_j^{-1}$ to the left and right sides gives $T_iG_i = G_{j-2}$. 
	From \Cref{lem DGH properties}(1), we have $T_i D_i = H_{\sigma(i)+1}$ for all $i$. Thus
	\begin{align*}
    	F_\Dual R_i^\ell 
		&= [T_iQ_{i+1},T_iQ_{i+2}] \times [T_iD_i,T_iG_i] \\*
		&= [P_{j}, Q_{j}] \times [H_{j+1}, G_{j-2}] \\*
		&= V_j. 
	\end{align*}
	
	\medskip
	\textbf{Case~\ref{case-upper-empty}.}
	When $A_{j} = Q_{j}$,~\eqref{system} gives 
	\[ G_i = G_{\sigma(j)} = T_{\tau(j)+1} G_{\tau(j)} =: D_i. \]
	Having $G_i = D_i$ means that the rectangle $R_i^\ell$ has empty interior. One could consider $R_i^\ell$ to be the horizontal segment $[Q_{i+1},Q_{i+2}] \times \{D_i\}$, but this segment is the upper-left part of the boundary of $\overline{R_{i+1}}$, so it already handled by the considerations of $\{R_k\}_{k=1}^{8g-4}$.
	
	\medskip
	\textbf{Case~\ref{case-lower}.}
	When $A_{j+1} = Q_{j+1}$,~\eqref{system} gives $T_{\tau(j)+2} G_{\tau(j)+1} = G_{\tau\sigma(j)-1} = G_{\tau(i)-1}$, so
	\begin{align*}
		T_i H_i
		&= T_i U_i G_{\tau(i)-1} \\
		&= T_i (T_{j} T_{\tau(i)-1}) (T_{\tau(j)+2} G_{\tau(j)+1}) \\
		&= T_{\tau(i)-1} T_{\tau(j)+2} G_{\tau(j)+1} \\
		&= U_{j+2} G_{\tau(j)+1} \\*
		&= H_{j+2}.
	\end{align*}
	We also have $T_i D_{i+1} = G_{\sigma(i)-1}$ from the definition of $D_{i+1}$. Therefore
	\begin{align*}
    	F_\Dual R_i^u 
		&= [T_iP_{i-1},T_iP_i] \times [T_iH_i, T_iD_{i+1}) \\
		&= [P_{j+1}, Q_{j+1}] \times [H_{j+2}, G_{j-1}) \\*
		&= V_{j+1}.
	\end{align*}
	
	\medskip
	\textbf{Case~\ref{case-lower-empty}.}
	When $A_{j+1} = P_{j+1}$,~\eqref{system} gives $G_{\tau(i)-1} = G_{\sigma(j+1)} = T_{j+1} G_{j-1}$, and applying $T_{j+1}^{-1} = T_{\tau(i)-1}$ to the left and right sides gives
	\begin{align*}
    	T_{\tau(i)-1} G_{\tau(i)-1} &= G_{j-1} \\
		T_{\sigma(i)} T_{\tau(i)-1} G_{\tau(i)-1} &= T_{j} G_{j-1} \\
		U_i G_{\tau(i)-1} &= T_{j} G_{j-1} \\
		H_i &= D_{\sigma(j)+1} = D_{i+1}.
	\end{align*}
	Having $H_i = D_{i+1}$ means that the rectangle $R_i^u$ has empty interior. One could consider $R_i^u$ to be the horizontal segment $[P_{i-1},P_i] \times \{D_{i+1}\}$, but this segment is the lower-right part of the boundary of $\overline{R_{i+1}}$, so it already handled by the considerations of $\{R_k\}_{k=1}^{8g-4}$.
	
\medskip
Cases~\ref{case-upper-empty} and~\ref{case-lower-empty} show that
\[
    \Omega_\Dual
    = \bigcup_{i=1}^{8g-4} R_i \cup \bigcup_{\substack{i \\ A_{\sigma(i)+1}=Q_{\sigma(i)+1}}} R_i^u \cup \bigcup_{\substack{i \\ A_{\sigma(i)}=P_{\sigma(i)}}} R_i^\ell 
\]
and Cases~\ref{case-upper} and~\ref{case-lower}, along with the initial explanation that $T_iR_i = V_{\sigma(i)}'$ for all $i$, show that
\begin{align*}
    F_\Dual(\Omega_\Dual)
    &= \bigcup_{i=1}^{8g-4} T_iR_i \cup \bigcup_{\substack{i \\ A_{j+1}=Q_{j+1}}} T_iR_i^u \cup \bigcup_{\substack{i \\ A_{j}=P_{j}}} T_iR_i^\ell, \qquad j=\sigma(i), \\
    &= \bigcup_{j=1}^{8g-4} V_j' \cup \bigcup_{\substack{j \\ A_{j+1}=Q_{j+1}}} V_{j+1} \cup \bigcup_{\substack{j \\ A_{j}=P_{j}}} V_j \\
    &= \bigcup_{k=1}^{8g-4} (V_k \cup V_k') \\*
    &= \Omega_\Dual.
\end{align*}
Every $T_i$ is a bijection, so $F_\Dual$ is a bijection on the interior of every $R_i, R_i^u, R_i^\ell$.
\end{proof}

Now that $\Omega_\Dual$ from~\eqref{Omega dual} has been established as the domain of $F_\Dual$, we can prove that $\Dual$ is dual to $\A$.

\begin{proof}[{Proof of \Cref{thm dual}}]
Recall $\phi(x,y) = (y,x)$. Comparing~\eqref{Omega E horizontal strips} and~\eqref{Omega dual vertical strips}, we see that 
\begin{align*}
    \phi(R_i'') &= [P_i,Q_i] \times [H_{i+1},G_{i-2}] = V_i, \\*
    \phi(R_i') &= [Q_i,P_{i+1}] \times [H_{i+1},G_{i-1}] = V_i',
\end{align*}
and since 
\[
	\Omega_\A = \bigcup_{i=1}^{8g-4} (R_i'' \cup R_i')
	\qquad\text{and}\qquad
	\Omega_\Dual = \bigcup_{i=1}^{8g-4} (V_i \cup V_i'),
\]
we have exactly $\phi(\Omega_\A) = \Omega_\Dual$.

\medskip
Let $(x_0,y_0) \in \Omega_\A$, and let $(x_1,y_1) := F_\A(x_0,y_0)$. 
Let $i$ be such that $y_0 \in [A_i,A_{i+1})$, and thus $(x_1,y_1) = (T_ix_0, T_iy_0)$.
Since $A_i$ might be either $P_i$ or $Q_i$ and $A_{i+1}$ might be either $P_{i+1}$ or $Q_{i+1}$, we must consider that $y_0$ could be in $[P_i, Q_i]$ or $[Q_i, P_{i+1}]$ or $[P_{i+1}, Q_{i+1}]$.

\begin{itemize}
	\item If $y_0 \in [P_i, Q_i)$ then from the decomposition of $\Omega_\A$ into horizontal strips in~\eqref{Omega E defn}, we must have that $x_0 \in [H_{i+1},G_{i-2}]$. Thus
\[ x_1 = T_i x_0 \in [T_iH_{i+1}, T_iG_{i-2}] = [D_{\sigma(i)}, T_iG_{i-2}]. \]
Since $G_{i-2} \in [P_{i-2},P_{i-1}]$, its image $T_i G_{i-2}$ is in
\[ [T_iP_{i-2}, T_iP_{i-1}] = [T_iP_{i-2}, P_{\sigma(i)+1}] \subset [Q_{\sigma(i)}, P_{\sigma(i)+1}], \]
and thus $x_1 \in [D_{\sigma(i)}, P_{\sigma(i)+1}] \subset [D_{\sigma(i)}, D_{\sigma(i)+1}]$.

	\item If $y_0 \in [Q_i, P_{i+1})$ then from~\eqref{Omega E defn}, we have $x_0 \in [H_{i+1},G_{i-1}]$, so
\[ x_1 = T_i x_0 \in [T_iH_{i+1}, T_iG_{i-1}] = [D_{\sigma(i)}, D_{\sigma(i)+1}]. \]

	\item If $y_0 \in [P_{i+1}, Q_{i+1})$ then from~\eqref{Omega E defn}, we have $x_0 \in [H_{i+2},G_{i-1}]$, so
\[ x_1 = T_i x_0 \in [T_iH_{i+2}, T_iG_{i-1}] = [T_iH_{i+2}, D_{\sigma(i)+1}]. \]
Since $H_{i+2} \in [Q_{i+2},Q_{i+3}]$ by \Cref{lem DGH properties}(3), its image $T_iH_{i+2}$ is in
\[ [T_iQ_{i+2},T_iQ_{i+3}] = [Q_{\sigma(i)},T_iQ_{i+3}] \subset [Q_{\sigma(i)},Q_{\sigma(i)+1}], \]
and thus $x_1 \in [Q_{\sigma(i)}, D_{\sigma(i)+1}] \subset [D_{\sigma(i)}, D_{\sigma(i)+1}]$.
\end{itemize}

In all three cases, $x_1 \in [D_{\sigma(i)}, D_{\sigma(i)+1}]$, and since we restrict to $x_1 \notin \Dual$, this means $F_\Dual$ acts on $(y_1,x_1)$ by $T_{\sigma(i)} = T_i^{-1}$ and therefore
\[ F_\Dual(y_1,x_1) = (T_i^{-1}y_1,T_i^{-1}x_1) = (T_i^{-1}T_iy_0, T_i^{-1}T_ix_0) = (y_0,x_0) = \phi(x_0,y_0). \]
Thus with $p = (x_1,y_1)$ we have that $\phi(F_\A^{-1}(p)) = F_\Dual(\phi(p))$ so long as $x_1 \notin \Dual$.
\end{proof}

\subsection{Examples of duality} \label{sec examples}

In this \namecref{sec examples}, we first compute the dual to the specific genus-$2$ example parameters from~\eqref{example A} and then give some descriptions of dual parameter choices that exist for any genus. 

\medskip Recall the example parameter choice
\[ \A = \{ P_1, P_2, P_3, P_4, Q_5, P_6, Q_7, Q_8, P_9, P_{10}, Q_{11}, Q_{12} \} \]
in~\eqref{example A}. 
Using~\eqref{defn H D} and the values $G_1,\ldots,G_{12}$ for this example given in~\eqref{example G}, we directly compute
\begin{align*}
	D_1 &= T_{2}P_{1} = P_1   &  D_7 &= T_{8}P_8 = Q_7 \\*
	D_2 &= T_{7}P_6 = P_2     &  D_8 &= T_{1}P_1 = Q_8 \\*
	D_3 &= T_{12}P_{12} = Q_3 &  D_9 &= T_{6}T_3P_3 = T_{11}P_{1} \\*
	D_4 &= T_{5}P_5 = Q_4     &  D_{10} &= T_{11}T_4P_2 = T_{4}P_{6} \\*
	D_5 & = T_{10}T_6T_3P_1 = T_{10}T_{11}P_1  &  D_{11} &= T_{4}Q_{3} \\* 
	D_6 &= T_{3}P_2 = P_6     &  D_{12} &= T_{9}P_9 = Q_{12}.
\end{align*}
Therefore, the parameter choice
\< \label{example D} \Dual = \{ P_1,\:  P_2,\:  Q_3,\:  Q_4,\:  T_{10}T_{11}P_1,\:  P_6,\:  Q_7,\:  Q_8,\:  T_{11}P_1,\:  T_4P_6,\:  T_4Q_3,\: Q_{12} \} \>
is dual to
\[ \A = \{ P_1, P_2, P_3, P_4, Q_5, P_6, Q_7, Q_8, P_9, P_{10}, Q_{11}, Q_{12} \}. \]
The domains $\Omega_\A$ and $\Omega_\Dual$ for this example are shown in Figures~\ref{fig attractor example} and~\Cref{fig dual attractor example}, respectively. Comparing these two figures, one can see that $\phi(\Omega_\A) = \Omega_\Dual$, where $\phi(x,y) = (y,x)$.

\begin{figure}[ht]
	\includegraphics[width=0.67\textwidth]{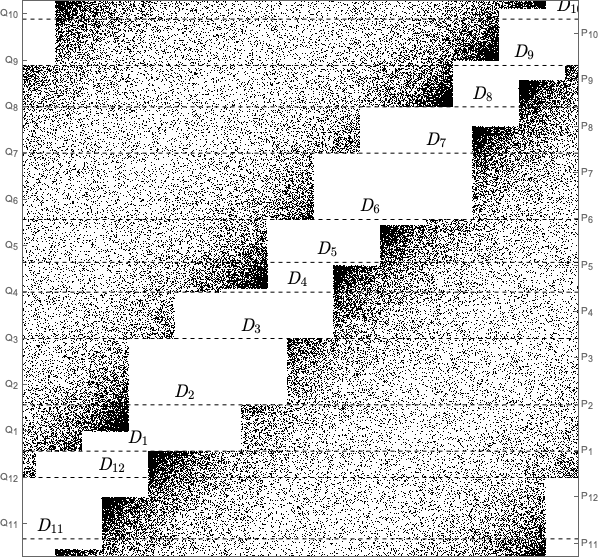}
	\caption{The domain $\Omega_\Dual$ for the parameters $\Dual$ in~\eqref{example D}.}
	\label{fig dual attractor example}
\end{figure}

\medskip
The following \namecref{thm dual examples} gives six types of extremal parameter choices whose duals can be described easily for any genus.

\begin{prop} \label{thm dual examples}
	For any $g \ge 2$,
	\begin{enumerate}[\quad a)]
		\item $\P$ and $\bar Q$ are dual to each other.
		\item \label{alternating dual} $\{P_1,Q_2,P_3,Q_4,\ldots\}$ and $\{Q_1,P_2,Q_3,P_4,\ldots\}$ are dual to each other.
		\item $\{P_1,P_2,Q_2,Q_2,P_3,P_4,Q_5,Q_6,\ldots\}$ is dual to itself.
		\item $\{Q_1,Q_2,P_2,P_2,Q_3,Q_4,P_5,P_6,\ldots\}$ is dual to itself.
	\end{enumerate}
\end{prop}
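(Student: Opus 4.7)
The plan is to apply \Cref{thm dual} to each of the four candidate families in turn. That theorem reduces the task to three computational steps: (i)~classify every index $i$ under \Cref{def types}, (ii)~solve the resulting loop/two-cycle/chain system from the discussion around \Cref{fig example graph} to obtain the values $G_i$, and (iii)~compute $D_i = T_{\tau\sigma(i)+1}G_{\tau\sigma(i)}$ from Equation~\ref{defn H D} and check that the resulting set equals the claimed~$\Dual$.

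The identities I would need beyond what is already in the paper are the four boundary values implicit in the proof of \Cref{thm extremal bijectivity},
\begin{align*}
    T_jP_j &= Q_{\sigma(j)+1}, & T_jQ_j &= Q_{\sigma(j)+2}, \\
    T_jP_{j+1} &= P_{\sigma(j)-1}, & T_jQ_{j+1} &= P_{\sigma(j)},
\end{align*}
together with the observations that $\sigma$ and $\tau$ both preserve parity (immediate from~\eqref{sigma} and~\eqref{tau}) and the small identity $\sigma(\tau\sigma(i)+1) \equiv i-1 \pmod{8g-4}$, which falls out after a short parity split on~$i$.

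With these in hand, cases~(a) and~(b) become essentially one-line verifications. For $\A = \P$, every index is Type~\ref{type-P-loop}, so $G_i = P_{i+1}$ uniformly and $D_i = T_{\tau\sigma(i)+1}P_{\tau\sigma(i)+1} = Q_{\sigma(\tau\sigma(i)+1)+1} = Q_i$; the other direction follows by the involutive character of the duality relation. For~(b), parity-preservation of $\sigma$ and $\tau$ forces odd indices to be Type~\ref{type-P-loop} (so $G_i = P_{i+1}$) and even indices to be Type~\ref{type-Q-loop} (so $G_i = P_i$), and the same boundary identities then produce $D_i = Q_i$ when $i$ is odd and $D_i = P_i$ when $i$ is even, which is the claim.

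The genuine obstacle lies in~(c) and~(d). Those patterns have period~$4$ in~$i$, and since $8g-4 = 4(2g-1)$ the residue classes mod~$4$ are stable under the index arithmetic of \Cref{thm system solution}, so I would split the verification into the four residues of~$i$ mod~$4$. In each class one encounters Type~\ref{type-PQ} or Type~\ref{type-QP} chains whose length depends on when the arithmetic progression $i + 4g\N$ first meets a multiple of~$2g-1$; \Cref{lem infinite +4g} guarantees termination, and the resulting telescoping products $T_{k_1}T_{k_2}\cdots P_m$ must collapse onto a single $P_i$ or $Q_i$ for self-duality to hold. Carrying out that collapse in one residue class is the bulk of the work; the remaining classes then follow by identical bookkeeping, with the answer independent of~$g$.
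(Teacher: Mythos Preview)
Your plan for parts~(a) and~(b) is correct and is essentially the paper's argument: the paper proves only~(b) explicitly (citing~\cite{AF91} for~(a)), and its fixed-point computations $G_{i-2}=T_{\sigma(i-2)}T_iG_{i-2}$ and $G_{2-k}=T_{\tau(k)+1}T_{\sigma(k)+1}G_{2-k}$ are precisely the Type~\ref{type-P-loop}/Type~\ref{type-Q-loop} loop resolutions you describe.

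Your assessment of~(c) and~(d), however, is mistaken. No chains arise. Working mod~$4$, the map $\sigma$ sends $1\mapsto3$, $2\mapsto0$, $3\mapsto1$, $0\mapsto2$ (so it exchanges the blocks $\{1,2\}$ and $\{3,0\}$), while $\tau(i)\equiv i+2$ and $i+2\equiv i+2$. For the pattern in~(c), $A_j=P_j$ exactly when $j\in\{1,2\}$ mod~$4$; thus if $i\equiv 1$ or $2$ then $\sigma(i)\in\{3,0\}$ gives $A_{\sigma(i)}=Q$ and $\tau(i)\in\{3,0\}$ gives $A_{\tau(i)}=Q$, so $i$ is Type~\ref{type-Q-loop} and $G_i=P_i$; if $i\equiv 3$ or $0$ then $\sigma(i)\in\{1,2\}$ gives $A_{\sigma(i)}=P$ and $i+2\in\{1,2\}$ gives $A_{i+2}=P$, so $i$ is Type~\ref{type-P-loop} and $G_i=P_{i+1}$. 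Since $\tau\sigma(i)\equiv\sigma(i)+2\equiv i\pmod 4$, the formula $D_i=T_{\tau\sigma(i)+1}G_{\tau\sigma(i)}$ together with your boundary identities then gives $D_i=P_i$ for $i\in\{1,2\}$ and $D_i=Q_i$ for $i\in\{3,0\}$, i.e.\ $\Dual=\A$. Part~(d) is the same with the two blocks swapped. So~(c) and~(d) are no harder than~(b); the ``telescoping products'' you anticipate never appear, and \Cref{lem infinite +4g} is not needed.
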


\noindent The duality of the classical cases $\bar P$ and $\bar Q$ is shown in \cite{AF91}. The domains for the other four parameters choices are shown for genus $g=2$ in \cite[Figure~12]{AK19}.

\begin{proof}[{Proof of \Cref{thm dual examples}(\ref{alternating dual})}]
	Let $A_i=P_i$ for odd $i$ and $A_i = Q_i$ for even $i$. When $i$ is odd,~\eqref{system} and the definition of $\sigma$ from~\eqref{sigma} give that $G_{4g-i} = T_iG_{i-2}$. Setting $j = \sigma(i-2) = 4g-(i-2) = 4g+2-i$, which is also odd, we also get that $G_{\sigma(j)} = T_jG_{j-2}$, or, equivalently, $G_{i-2} = T_{4g+2-i}G_{4g-i}$. Therefore
	\[ G_{i-2} = T_{4g+2-i}G_{4g-i} = T_{4g+2-i}(T_iG_{i-2}) = T_{\sigma(i-2)}T_iG_{i-2}. \]
	The fixed points of $T_{\sigma(i-2)}T_i$ are $P_{i-1}$ and $Q_i$, and since $G_{i-2} \in [P_{i-2},P_{i-1}]$, we have that $G_{i-2} = P_{i-1}$. Since this holds for all even $i$, we can reindex to say that
	\[ G_i = P_{i+1} \qquad\text{for $i$ odd.} \]
	For $k$ even,~\eqref{system} and~\eqref{sigma} and~\eqref{tau} give that $G_{2-k} = T_{k+4g-1}G_{k+4g-2}$ and that $G_{k+4g-2} = T_{3-k}G_{2-k}$, so
	\[ G_{2-k} = T_{k+4g-1}T_{3-k}G_{2-k} = T_{\tau(k)+1}T_{\sigma(k)+1}G_{2-k}, \]
	and since the fixed points of $T_{\tau(k)+1}T_{\sigma(k)+1}$ are $P_{\sigma(k)}$ and $Q_{\sigma(k)+1}$, we must have $G_{2-k} = G_{\sigma(k)} = P_{\sigma(k)} = P_{2-k}$. Reindexing to $i = 2-k$, we have that
	\[ G_i = P_{i} \qquad\text{for $i$ even.} \]
	Having determined $G_1,\ldots,G_{8g-4}$, we use~\eqref{defn H D} to compute that
	\[ D_i = T_{\tau\sigma(i)+1}G_{\tau\sigma(i)} = \left\{\begin{array}{ll}
		T_{\tau\sigma(i)+1}P_{\tau\sigma(i)+1} &\text{if $i$ odd} \\
		T_{\tau\sigma(i)+1}P_{\tau\sigma(i)} &\text{if $i$ even}.
	\end{array}\right. \]
	In general $T_{\tau\sigma(i)+1}P_{\tau\sigma(i)+1} = Q_i$ and $T_{\tau\sigma(i)+1}P_{\tau\sigma(i)} = P_i$, so this gives exactly $\Dual = \{Q_1,P_2,Q_3,P_4,Q_5,\ldots\}$ as the dual to $\A = \{P_1,Q_2,P_3,Q_4,\ldots\}$. 
\end{proof}


\begin{thebibliography}{99}

\bibitem{AK19} A.~Abrams, S.~Katok. {Adler and Flatto revisited: cross-sections for geodesic~flow on compact surfaces of constant negative curvature}, \textit{Studia Mathematica} \textbf{246} (2019), 167--202. 

\bibitem{AKU20} A.~Abrams, S.~Katok, I.~Ugarcovici. {Flexibility of entropy of boundary maps for surfaces of constant negative curvature.} Submitted to \textit{Ergodic Theory and Dynamical Systems}, September 2019. \texttt{\color{blue}arxiv.org/abs/1909.07032}

\bibitem{A98} R.~Adler. {Symbolic dynamics and Markov partitions}, \textit{Bull. Amer. Math. Soc.} \textbf{35} (1998), No. 1, 1--56.

\bibitem{AF91} R.~Adler, L.~Flatto. {Geodesic flows, interval maps, and symbolic dynamics}, \textit{Bull. Amer. Math. Soc.} \textbf{25} (1991), No.~2, 229--334.

\bibitem{BiS87} J.~Birman, C.~Series. {Dehn's algorithm revisited, with applications to simple curves on surfaces}, \textit{Combinatorial Group Theory and Topology} (AM-111), Princeton University Press, (1987), 451--478.

\bibitem{B88} F.~Bonahon. {The geometry of Teichm\"uller space via geodesic currents}, \textit{Inventiones Mathematicae} \textbf{92} (1988), 139--162. 

\bibitem{BS79} R.~Bowen, C.~Series. {Markov maps associated with Fuchsian groups}, \textit{Inst. Hautes \'Etudes Sci. Publ. Math.} \textbf{50} (1979), 153--170.

\bibitem{KH} A.~Katok, B.~Hasselblatt. \textit{Intro.~to the Modern Theory of Dynamical Systems}, Cambridge University Press, 1995.

\bibitem{K96} S.~Katok. {Coding of closed geodesics after Gauss and Morse.} \textit{Geom. Dedicata} \textbf{63} (1996), 123--145.

\bibitem{KU10} S.~Katok, I.~Ugarcovici. {Structure of attractors for $(a,b)$-continued fraction transformations.} \textit{Journal of Modern Dynamics} \textbf{4} (2010), 637--691.

\bibitem{KU12} S.~Katok, I.~Ugarcovici. {Applications of $(a,b)$-continued fraction transformations.} \textit{Ergodic Theory and Dynamical Systems} \textbf{32} (2012), 755--777. 

\bibitem{KU17} S.~Katok, I.~Ugarcovici. {Structure of attractors for boundary maps associated to Fuchsian groups}, \textit{Geometriae Dedicata} \textbf{191} (2017), 171--198.

\bibitem{KU17e} S.~Katok, I.~Ugarcovici. {Errata: Structure of attractors for boundary maps associated to Fuchsian groups}, \textit{Geometriae Dedicata} \textbf{198} (2019), 189--181.

\bibitem{Ko29} P.~Koebe. {Riemannsche Mannigfaltigkeiten und nicht euklidische Raumformen}, IV, \textit{Sitzungsberichte Deutsche Akademie von Wissenschaften}, (1929), 414--557.

\bibitem{W92} B.~Weiss. {On the work of Roy Adler in ergodic theory and dynamical systems}, in \textit{Symbolic dynamics and its applications} (New Haven, CT, 1991), 19--32, \textit{Contemp. Math.} \textbf{135}, Amer. Math. Soc., Providence, RI, 1992.

\end{thebibliography}
\end{document}